\documentclass[12pt]{amsart}%
%\addtolength{\oddsidemargin}{-.5in}
%\addtolength{\evensidemargin}{-.5in}
%\addtolength{\textwidth}{1.1in} \addtolength{\textheight}{1.4in}
\usepackage{amsmath,amssymb}

%%%%%%%%%%%%%%%%%%%% Text italic %%%%%%%%%%%%%%%%%%%%%%%%%%%%
\theoremstyle{plain}
\newtheorem{thm}{Theorem}[section]
\newtheorem{theorem}[thm]{Theorem}

\newtheorem{lemma}[thm]{Lemma}
\newtheorem{corollary}[thm]{Corollary}
\newtheorem{proposition}[thm]{Proposition}
%%%%%%%%%%%%%%%%%%%% Text roman %%%%%%%%%%%%%%%%%%%%%%%%%%%%%
\theoremstyle{definition}
\newtheorem{remark}[thm]{Remark}

\newtheorem{definition}[thm]{Definition}

\newtheorem{example}[thm]{Example}

\newtheorem{conjecture}[thm]{Conjecture}

\numberwithin{equation}{section}
%%%%%%%% Special symbols %%%%%%%%%%%%%%%%%%%%%%%%%%%%%%%
% Skriptbuchstaben

\newcommand{\sC}{{\mathcal C}}

\newcommand{\sK}{{\mathcal K}}

\newcommand{\sO}{{\mathcal O}}

\newcommand{\sR}{{\mathcal R}}

\newcommand{\sU}{{\mathcal U}}

% Sonderbuchstaben mit Doppellinie

\newcommand{\C}{{\mathbb C}}

\newcommand{\N}{{\mathbb N}}
\newcommand{\BP}{{\mathbb P}}

\newcommand{\Z}{{\mathbb Z}}

%Sonstiges

\newcommand{\GL}{{\rm \mathbf GL}}
\newcommand{\PGL}{{\rm \mathbf PGL}}

\newcommand{\Sym}{{\rm Sym}}

%%%%%%%%%%%%%%%%%%%%%%%%%%%%%%%%%%%%%%%%%%%%%%%%%%%%%%%%%%%%%%

\title[Euler-symmetric projective varieties]{Euler-symmetric projective varieties}
\author{Baohua Fu and Jun-Muk Hwang}
\thanks{Baohua Fu is supported by National Natural Science Foundation of China (11321101 and 11431013).  Jun-Muk Hwang is
supported by National Researcher Program 2010-0020413 of NRF}
\begin{document}
\maketitle  \setcounter{tocdepth}{1}
\begin{abstract}
Euler-symmetric projective varieties are nondegenerate projective varieties admitting many $\mathbb{C}^{\times}$-actions of Euler type. They are quasi-homogeneous and uniquely determined by their fundamental forms at a general point. We show that Euler-symmetric projective varieties can be classified by symbol systems, a class of algebraic objects  modeled on the systems of fundamental forms at general points of projective varieties.  We study  relations between the algebraic properties of symbol systems and the geometric properties of Euler-symmetric projective varieties. We describe also the relation between  Euler-symmetric projective varieties of dimension $n$ and equivariant compactifications of the vector group $\mathbb{G}_a^n$.
\end{abstract}

\section{Introduction}
In \cite{FH2}, the authors  introduced the notion of quadratically symmetric varieties in order to link the study of special birational transformations to the prolongations of linear Lie algebras. A quadratically symmetric variety is  quasi-homogeneous and it is homogeneous if and only if it is one of the Hermitian symmetric spaces of rank 2. Thus we may say that quadratically symmetric varieties are quasi-homogeneous generalizations of Hermitian symmetric spaces of rank 2.

The goal of this article is to introduce Euler-symmetric projective varieties, which are quasi-homogeneous generalizations of Hermitian symmetric spaces of arbitrary ranks. Euler-symmetric projective varieties are
  nondegenerate projective varieties admitting many $\mathbb{C}^{\times}$-actions of Euler type
  (Definition \ref{d.Euler}).

 We show that any Euler-symmetric projective variety is uniquely determined by their fundamental forms at a general point (Proposition \ref{p.isom}). By Cartan's theorem (Theorem \ref{t.Cartan}), the collection of these fundamental forms satisfies the prolongation property. To make this more systematic, we introduce the notion of a symbol system, formalizing the prolongation property.  For any symbol system ${\bf F}$, we construct an Euler-symmetric projective variety $M({\bf F})$ whose  fundamental forms at general points are isomorphic to ${\bf F}$ (Theorem \ref{t.model}). This reduces the classification of Euler-symmetric projective varieties to that of symbol systems.

The relation between the algebraic properties of a symbol system and the geometric properties of the associated Euler-symmetric projective variety is very intriguing.
A key question is which symbol systems give rise to nonsingular Euler-symmetric projective varieties.  We show that for a nonsingular Euler-symmetric projective variety,  the base loci of the symbol system is nonsingular (Proposition \ref{p.vmrt}).

 The most important geometric property of Euler-symmetric projective varieties is that they are equivariant compactifications of  vector groups. Conversely, we show that an $n$-dimensional prime Fano manifold of Picard number one is Euler-symmetric if and only if it is an equivariant compactification of $\mathbb{G}_a^n$ (Corollary \ref{c.SEC}).

Our results show that the interaction between the algebra of a symbol system ${\bf F}$ and the geometry of the Euler-symmetric projective variety  $M({\bf F})$
  is worth investigating. Among others, this will give new insights into fundamental forms of projective varieties. We mention that we have touched on only a small number of issues in this article: there remain a wide range of questions to be explored in this interaction.

\section{Euler-symmetric projective varieties and systems of fundamental forms}

\begin{definition}\label{d.Euler} Let $Z \subset \BP V$ be a projective variety. For a nonsingular point $x\in Z$, a $\C^{\times}$-action on  $Z$ coming from a multiplicative subgroup of $\GL(V)$ is said to be  of {\em Euler type} at $x$, if $x$ is an isolated fixed point of the induced action on $Z$ and the isotropic action on
the tangent space $T_x Z$ is by scalar multiplication (i.e., the induced action on $\BP T_x Z$ is trivial). We say that $Z \subset \BP V$ is {\em Euler-symmetric}
if for a general point $x \in Z$, there exists a $\C^{\times}$-action on $Z$ of Euler type at $x$. \end{definition}

The example below shows that there are at least as many nonsingular Euler-symmetric projective varieties as nonsingular projective varieties.

\begin{example}\label{e.blowup}
Let $S \subset \BP^{n-1} \subset \BP^n$ be a nonsingular algebraic subset in a hyperplane of $\BP^n$. For each point $x \in \BP^n \setminus \BP^{n-1}$, the scalar multiplication on the affine space $\BP^n \setminus \BP^{n-1}$  regarded as a vector space with the origin at $x$ can be extended to  a $\C^{\times}$-action $$A_x: \C^{\times} \times \BP^n \to \BP^n,$$ which fixes every point of the hyperplane $\BP^{n-1}$. Let $\beta: {\rm Bl}_S(\BP^n) \to \BP^n$ be the blowup of $\BP^n$ along $S$ and let $E$ be the exceptional divisor. For  suitable positive integers $a$ and $b$, the line bundle $L:= \sO(-aE) \otimes \beta^* \sO_{\BP^n}(b)$ is very ample. The action $A_x$ induces an action on the image $$ Z \subset \BP H^0({\rm Bl}_S(\BP^n), L)^*$$ of the projective embedding, which is of Euler type at $x \in Z$. Thus $Z$ is an Euler-symmetric projective variety. \end{example}

We will give more examples of Euler-symmetric projective varieties in the next section.

\begin{proposition}\label{p.quasihomo}
 An Euler-symmetric projective variety $Z
\subset \BP V$ is quasi-homogeneous, i.e., the linear automorphism
group ${\rm Aut}(Z) \subset \PGL(V)$ acts on $Z$ with a dense open
orbit.  \end{proposition}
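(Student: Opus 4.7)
The plan is to show that the connected component $G := {\rm Aut}(Z)^{\circ}$ acts on $Z$ with an orbit of top dimension $\dim Z$; such an orbit is locally closed, hence open in its closure, hence open and dense in the irreducible variety $Z$. So fix a general, and therefore smooth, point $x \in Z$ and write $\fg = {\rm Lie}(G)$. It suffices to prove that the evaluation map
\[
{\rm ev}_x \colon \fg \longrightarrow T_xZ, \qquad \xi \mapsto \xi|_x,
\]
is surjective.

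The strategy is to exploit Euler-type one-parameter subgroups at points $y$ close to $x$. By Definition \ref{d.Euler} there is a dense Zariski-open $U \subset Z$ such that every $y \in U$ carries an Euler-type subgroup $H_y \subset G$ with $y$ as isolated fixed point and scalar action on $T_yZ$. For each $y \in U$ pick a generator $h_y \in {\rm Lie}(H_y) \subset \fg$ and rescale it so that the induced endomorphism of $T_yZ$ equals ${\rm id}$; this rescaling is legitimate because the scalar weight is nonzero (otherwise the tangent action would be trivial and $y$ would not be isolated in the fixed locus). In local coordinates around $x$ identifying a neighborhood of $x$ with an open subset of $T_xZ$, the vector field attached to $h_y$ vanishes at $y$ with identity linearization, so Taylor expansion gives
\[
h_y(z) = (z - y) + O(|z-y|^2), \qquad \text{hence} \qquad h_y|_x = -(y-x) + O(|y-x|^2) \in T_xZ.
\]

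If ${\rm ev}_x(\fg)$ were a proper subspace, a nonzero linear functional $\ell$ on $T_xZ$ would vanish on it, giving $\ell(h_y|_x) = 0$ for every $y \in U$. Specializing to $y = x + tv$ along a sequence $t \to 0$ with $x + tv \in U$ (possible for a dense set of directions $v$ since $U$ is dense), linearity of $\ell$ forces $t\,\ell(v) = O(t^2)$, hence $\ell(v) = 0$; as $v$ ranges over a dense subset of $T_xZ$ this forces $\ell \equiv 0$, a contradiction. The main technical point is to make the Taylor remainder uniform as $y \to x$ along a fixed ray, which is handled by choosing the family $y \mapsto h_y$ algebraically over a neighborhood of $x$: the parameter space of Euler subgroups in $G$ maps dominantly to $Z$, so a local algebraic section over a neighborhood of $x$ produces a family of normalized generators whose coefficients in a fixed chart depend algebraically on $y$, giving the required uniform remainder bound.
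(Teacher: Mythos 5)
Your argument is correct in outline but takes a genuinely different route from the paper. The paper fixes a single Euler point $x$, invokes the Bialynicki-Birula linearization to get coordinates in which the $A_x$-orbits are radial lines through $x$, and combines this with the standard fact that there is a $G$-stable open set $Z_o$ on which all orbits are closed: every nearby $y$ then has $x\in\overline{A_x\cdot y}\cap Z_o\subset G\cdot y$, so $G\cdot x\supset U$. You instead work infinitesimally, showing ${\rm ev}_x:\fg\to T_xZ$ is surjective by letting the (normalized) Euler vector fields $h_y$ for $y$ near $x$ produce tangent vectors $-(y-x)+O(|y-x|^2)$ that span $T_xZ$ in the limit. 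This buys independence from Bialynicki-Birula and from the ``generic orbits are closed'' lemma, at the cost of needing Euler points densely near $x$ and a uniform remainder estimate. That last step is the only place where your write-up is looser than it should be: (a) the one-parameter subgroups of $G$ fall into countably many conjugacy classes, each a variety, so you need the uncountability of $\C$ to conclude that a \emph{single} such class, with its incidence variety of Euler pairs $(\lambda,y)$, dominates $Z$ --- ``the parameter space of Euler subgroups maps dominantly to $Z$'' is not literally a statement about one algebraic family until you make this selection; and (b) a dominant morphism need not admit a local \emph{algebraic} (rational) section, so you should take a local \emph{analytic} section over the smooth locus of that dominant map (generic smoothness plus the implicit function theorem), which is all the uniform Taylor estimate requires. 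With those two points made explicit the proof is complete, and it is a legitimate alternative to the one in the paper.
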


\begin{proof} Let $G \subset \PGL(V)$ be the identity component of the group of
projective automorphisms of $Z$. We need to show that $G$ has an
open orbit on $Z$. By the general structure of an algebraic group
action, there exists a $G$-stable Zariski open subset $Z_o \subset Z$ such
that for any $x\in Z_o$, the intersection of the orbit $G \cdot x$
and $Z_o$ is a closed subset in $Z_o$. For a general $x \in Z_o$,
let $A_x \subset G$ be a multiplicative subgroup inducing a $\C^{\times}$-action of Euler type at $x$. From the Bialynicki-Birula decomposition
theorem for $\C^{\times}$-action(\cite{BB}), there exists an open neighborhood $U
\subset Z_o$ of $x$ with holomorphic coordinates $z_1, \ldots,
z_n, n := \dim Z,$ on $U$ such that the orbits of $A_x$ are radial lines through
$x=(z_1= \cdots = z_n=0)$ in this coordinates. Thus for any point
 $y \in U \setminus \{x\}$, the closure of the orbit $A_x \cdot y \subset G \cdot y$
 contains $x$. Since the $G$-orbit of $y$ is closed in $Z_o$, we see
 that $x \in G \cdot y$, implying $y \in G \cdot x$. Thus $G\cdot
 x$ contains an open subset in $Z$. \end{proof}

To describe Euler-symmetric projective varieties explicitly, it is convenient to use fundamental forms, the definition of which we recall below (see p. 98 of \cite{IL} or Section 2.1 \cite{LM03}).

\begin{definition}\label{d.FF}
Let $x \in Z \subset \BP V$ be a nonsingular point of a nondegenerate projective variety.
Let $L$ be the line bundle on $Z$ given by the restriction of the hyperplane line bundle on $\BP V$. For each nonnegative integer $k$, let ${\bf m}_{x,Z}^k$ be the $k$-th power of the maximal ideal ${\bf m}_{x, Z}.$
For a section  $s\in H^0(Z, L),$ let $j_x^k(s)$ be the $k$-jet of $s$ at $x$ such that $j_x^0(s) = s_x \in L_x$. We have a descending filtration of the dual space $V^* \subset H^0(Z, L)$ by
$$V^* \cap {\rm Ker}(j_x^{k}) \subset V^* \cap {\rm Ker}(j_x^{k-1}).$$ The induced homomorphism
$$(V^* \cap {\rm Ker}(j_x^{k-1})) / (V^* \cap {\rm Ker}(j_x^{k}))  \to L_x \otimes \Sym^{k} T^*_x Z$$ is injective. For each $k \geq 2$, the subspace $F^{k}_x \subset \Sym^k T^*_x Z$ defined by the image of this homomorphism is called the $k$-th {\em fundamental form} of $Z$ at $x$. For convenience, set
 $F^0_x = \Sym^0 T^*_x Z = \C$ and $F^1_x = \Sym^1 T^*_x Z = T^*_x Z$. The collection of subspaces $${\bf F}_x:= \oplus_{k \geq 0}  F^k_x \subset \oplus_{k \geq 0} \Sym^k T^*_x Z$$ is called the {\em system of fundamental forms} of $Z$ at $x$.
\end{definition}

It is straight-forward to translate this definition of fundamental forms into the language of inhomogeneous coordinates, as follows.

\begin{lemma}\label{l.FF}
Let $x \in Z \subset \BP V$ be a nonsingular point of a nondegenerate projective variety.
 We can choose \begin{itemize} \item[(a)]  positive integers  $1 = m_1 < m_2 < \cdots < m_r$ and  $$n_1 =n = \dim Z,  n_2, \ldots, n_r  \mbox{ satisfying } \dim \BP V  = n_1 + \cdots + n_r;$$ \item[(b)] an inhomogeneous coordinate system $$(z^{(1)}_1, \ldots, z^{(1)}_{n_1}, z^{(2)}_1, \ldots, z^{(2)}_{n_2}, \ldots, z^{(r)}_1, \ldots, z^{(r)}_{n_r})$$ on $\BP V$ such that \begin{itemize} \item[(b1)] $x = (z^{(i)}_j =0, 1 \leq i \leq r, 1 \leq j \leq n_i)$; \item[(b2)]
the embedded tangent space ${\bf T}_x Z$  of $Z$ at $x$ is given by   $$ {\bf T}_x Z = (z^{(i)}_j =0, 2 \leq i \leq r, 1 \leq j \leq n_i);$$\end{itemize}
   \item[(c)]   holomorphic functions   $$h^i_j(z_1, \ldots, z_{n}), \ 2 \leq i \leq r, 1 \leq j \leq n_i, $$ in the variables $z_1 := z^{(1)}_1, \ldots, z_n := z^{(1)}_n$ defined near the origin of ${\bf T}_x Z$ such that \begin{itemize} \item[(c1)] the germ of   $Z$ at $x$ is  defined by the equations $$
z^{(i)}_j = h^i_j(z_1, \ldots, z_n), \ 2 \leq i \leq r, 1 \leq j \leq n_i;$$
 \item[(c2)] for each $i, 2\leq i \leq r$,   the lowest order terms of $h^i_j, 1 \leq j \leq n_i, $ are $n_i$ linearly independent  homogeneous polynomials of degree $m_i$ in the variables $z_1, \ldots, z_n.$ \end{itemize} \end{itemize} Then for each $2 \leq i \leq r$, the collection of homogeneous polynomials of degree $m_i$ arising as the lowest order terms of $h^i_j, 1 \leq j \leq n_i$, is exactly the $m_i$-th fundamental form $F^{m_i}_x$ (and $F^k_x =0$ for $k \not\in \{0,1, m_1, \ldots, m_r\}$). \end{lemma}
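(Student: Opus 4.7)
The plan is to realize the intrinsic definition of fundamental forms in affine coordinates adapted to the jet filtration of $V^*$ at $x$. First, I would work in an affine chart on $\BP V$ with $x$ at the origin, trivializing $L$ by the constant section $1$, so that sections of $L$ near $x$ become holomorphic functions and $j_x^k$ becomes truncation of the Taylor series at $x$. Since ${\bf T}_x Z \subset \BP V$ is a linear subspace of dimension $n$ through $x$, a further linear change of coordinates makes it a coordinate subspace; call its coordinates $z_1,\dots,z_n$ and denote the remaining normal coordinates by $y_1,\dots,y_{\dim\BP V - n}$. Smoothness of $Z$ at $x$ with embedded tangent space ${\bf T}_x Z$ forces projection of $Z$ onto ${\bf T}_x Z$ to be a local isomorphism near $x$, so each $y_\alpha|_Z$ is a power series $\tilde h_\alpha(z_1,\dots,z_n)$ with vanishing constant and linear parts.

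Next I would analyze the filtration $W^k := V^*\cap\ker(j_x^{k-1})$. In the chart, $W^1$ consists of linear forms vanishing at $x$, i.e., linear combinations of the $z_j$ and $y_\alpha$; the tangential ones have nonzero $1$-jet, so $W^1/W^2 \cong T_x^*Z$, confirming $F^1_x = T_x^*Z$ and that $z_j$ project to a basis of $F^1_x$. For $k\ge 2$, since each $z_j$ has order exactly $1$, any element of $W^k$ must be a pure linear combination of the $y_\alpha$ whose induced restriction to $Z$ vanishes to order at least $k$; in particular the filtration $W^k$ for $k\ge 2$ sits entirely in the normal direction. Nondegeneracy of $Z$ forces $\bigcap_k W^k = 0$, so this induced filtration on the normal space has finite length, with jumps at some integers $2\le m_2 < \cdots < m_r$; set $n_i := \dim W^{m_i}/W^{m_i+1}=\dim F^{m_i}_x$. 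The dimension count in (a) then follows from $\dim V^* = 1 + n + \sum_{i\ge 2} n_i$.

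I would then choose a basis of normal coordinates adapted to this filtration: for each $i\ge 2$, pick $z^{(i)}_1,\dots,z^{(i)}_{n_i}$ whose classes in $W^{m_i}/W^{m_i+1}$ form a basis. Because $W^k$ for $k\ge 2$ stays in the normal direction, this can be accomplished by a purely normal linear change of coordinates, preserving the graph structure. With coordinates so chosen as in (b), $Z$ is defined locally by $z^{(i)}_j = h^i_j(z_1,\dots,z_n)$, and by construction each $h^i_j$ has Taylor series starting in degree exactly $m_i$---this is the content of $z^{(i)}_j \in W^{m_i}\setminus W^{m_i+1}$---with the degree-$m_i$ homogeneous parts linearly independent in $\Sym^{m_i}T_x^*Z$, giving (c). The span of these degree-$m_i$ parts is then by definition the image of $W^{m_i}/W^{m_i+1}$ in $L_x\otimes\Sym^{m_i}T_x^*Z$, which is $F^{m_i}_x$; for $k\notin\{0,1,m_2,\dots,m_r\}$ the filtration does not jump, so $F^k_x = 0$.

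The proof is essentially a dictionary between the intrinsic and coordinate-based definitions of fundamental forms, so the only genuine content is the linear-algebra maneuver of picking normal coordinates compatible with the jet filtration. The main obstacle to handle cleanly is showing that this change of coordinates can be carried out in the normal direction alone without disrupting the graph representation of $Z$ over ${\bf T}_x Z$; this rests on the observation that $W^k\subset W^2$ for $k\ge 2$ is entirely normal. Nondegeneracy of $Z$ is essential throughout, guaranteeing both that the filtration terminates and that the dimension count closes.
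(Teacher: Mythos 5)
Your argument is correct, and it is exactly the ``straightforward translation'' the paper has in mind: the paper states Lemma \ref{l.FF} without proof, and your dictionary between the jet filtration $W^k = V^*\cap\ker(j_x^{k-1})$ and graph coordinates adapted to its jumps (using that $W^k\subset W^2$ is purely normal for $k\ge 2$, so the adapting change of coordinates preserves the graph presentation, and that nondegeneracy gives $\bigcap_k W^k=0$ and the dimension count) is the intended argument. No gaps.
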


     \begin{definition}\label{d.isom} Let  $Z_1, Z_2 \subset \BP V$ be two projective varieties of equal dimension.
     Let $x_1 \in Z_1$ and $x_2 \in Z_2$ be nonsingular points. We say that the systems of fundamental forms
     ${\bf F}_{x_1}$ and ${\bf F}_{x_2}$ are {\em isomorphic} if there exists a linear isomorphism
     $\varphi: T^*_{x_1} Z_1 \to T^*_{x_2} Z_2$ such that the induced isomorphism
     $$\oplus_{k \geq 0} \Sym^k T^*_{x_1} Z_1 \to \oplus_{k \geq 0} \Sym^k T^*_{x_2} Z_2$$ sends
     ${\bf F}_{x_1}$ isomorphically to ${\bf F}_{x_2}$.
     \end{definition}

     \begin{proposition}\label{p.isom}
     Let $Z_1$ and $Z_2$ be two Euler-symmetric projective varieties in $\BP V$ of equal dimension.
     Let $x_1 \in Z_1$ and $x_2 \in Z_2$ be general points. If ${\bf F}_{x_1}$ and ${\bf F}_{x_2}$ are isomorphic in the sense of Definition \ref{d.isom}, then $Z_1$ and $Z_2$ are isomorphic by a projective transformation on $\BP V$. \end{proposition}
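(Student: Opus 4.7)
The plan is to show that for an Euler-symmetric variety $Z$, the presence of the Euler action at $x$ forces the local equations of $Z$ near $x$, in the coordinates of Lemma \ref{l.FF}, to be given \emph{purely} by its fundamental forms $F^{m_i}_x$—with no higher-order perturbations. Once this is established, matching fundamental forms via $\varphi$ yields a projective transformation intertwining $Z_1$ and $Z_2$ locally, and irreducibility propagates the local match to a global one.

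First, I would analyze the weight decomposition of $V$ under the Euler action $A_{x_1} \subset \GL(V)$. Fix a lift $\hat{x}_1 \in V$, which is an eigenvector; after rescaling the cocharacter using the (nonzero) scalar weight on $T_{x_1} Z$, we may assume $V = \bigoplus_{k \geq 0} V_k$ with $V_0 = \C \hat{x}_1$, with $V_1$ identified with the weight-$1$ piece of the embedded tangent direction, and with all remaining weights being positive integers $1 < m_2 < \cdots < m_r$. That weights are nonnegative, that $V_0 = \C \hat{x}_1$, and that $\dim V_{m_i} = n_i$ (in the notation of Lemma \ref{l.FF}) all follow from nondegeneracy of $Z_1 \subset \BP V$: any ``missing'' or ``extra'' coordinate direction of weight $m$ would be forced by $A_{x_1}$-invariance to vanish identically on $Z_1$, giving a hyperplane containing $Z_1$. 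Picking the inhomogeneous coordinates $z^{(i)}_j$ of Lemma \ref{l.FF} to be weight eigenvectors of weight $m_i$ for $j = 1, \ldots, n_i$, the $A_{x_1}$-invariance of $Z_1$ forces each $h^i_j(z_1, \ldots, z_n)$ in the local equation $z^{(i)}_j = h^i_j$ to be homogeneous of degree exactly $m_i$; by Lemma \ref{l.FF} this homogeneous polynomial is a component of $F^{m_i}_{x_1}$. Hence near $x_1$,
\[ Z_1 = \{ z^{(i)}_j = F^{m_i}_{x_1, j}(z_1, \ldots, z_n) \;:\; 2 \leq i \leq r, \; 1 \leq j \leq n_i \}. \]

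Next, running the identical argument at $x_2$ describes $Z_2$ locally by $\tilde z^{(i)}_j = F^{m_i}_{x_2, j}(\tilde z_1, \ldots, \tilde z_n)$ in coordinates adapted to $A_{x_2}$. The assumed isomorphism $\varphi : T^*_{x_1} Z_1 \to T^*_{x_2} Z_2$ identifies $F^{m_i}_{x_1}$ with $F^{m_i}_{x_2}$ and hence dually identifies each weight space $V_{m_i}$ of $A_{x_1}$ with its counterpart under $A_{x_2}$. Assembling these pieces yields an explicit $\Phi \in \GL(V)$ sending $\hat{x}_1 \mapsto \hat{x}_2$ and carrying the weight decomposition—and therefore the local defining equations—of $Z_1$ at $x_1$ onto those of $Z_2$ at $x_2$. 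Thus $[\Phi] \in \PGL(V)$ sends an analytic neighborhood of $x_1$ in $Z_1$ isomorphically onto one of $x_2$ in $Z_2$.

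Finally, because both $Z_1$ and $Z_2$ are quasi-homogeneous by Proposition \ref{p.quasihomo} and hence irreducible, a projective transformation that identifies them on an analytic open subset must identify them globally, giving $[\Phi](Z_1) = Z_2$. The main obstacle is the first step: confirming that $A_{x_1}$-invariance together with nondegeneracy of $Z_1$ rigidifies the local embedding so completely that no deformation of the higher-order jets is possible. Once this rigidification is in place, the remaining content reduces to careful bookkeeping in weight-graded coordinates.
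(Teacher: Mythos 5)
Your proposal is correct and follows essentially the same route as the paper: choose the inhomogeneous coordinates of Lemma \ref{l.FF} to be weight eigenfunctions of the Euler action, observe that invariance of the germ forces each $h^i_j$ to equal its lowest-order term (a component of $F^{m_i}_x$), and conclude that $Z$ is determined up to $\GL(V)$ by its system of fundamental forms at $x$. The extra bookkeeping you supply on the weight decomposition of $V$ and the final globalization via irreducibility are details the paper leaves implicit, not a different argument.
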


     \begin{proof} If there exists  a $\C^{\times}$-action on a nondegenerate variety $Z \subset \BP V$ which is of Euler type at a nonsingular point $x \in Z$,  the induced action on $\oplus_{k \geq 0} \Sym^k T^*_x Z$ preserves the subspaces $V^* \cap {\rm Ker}(j^k_x)$ and ${\bf F}_x$  in Definition \ref{d.FF}.  Thus in Lemma \ref{l.FF}, we can choose the inhomogeneous coordinates $z^{(i)}_j$ to be eigenfunctions of
     the $\C^{\times}$-action such that an element $s \in \C^{\times}$ acts by
     $z^{(i)}_j \mapsto s^{d_{ij}} z^{(i)}_j$ for some integer $d_{ij}$. For the germ of $Z$ near $x$ to be preserved under this
     $\C^{\times}$-action, the holomorphic function $h^i_j$ in Lemma \ref{l.FF} (c) must be a homogeneous polynomial of degree $m_i$ (i.e. all higher order terms vanish). Thus such $Z$ is determined by the isomorphism type of the system of fundamental forms at $x$, up to the action of $\GL(V)$. \end{proof}

%\begin{proposition} \label{p.CI}
%An Euler-symmetric projective variety is a complete intersection if and only if it is a hyperquadric.
%\end{proposition}
%\begin{proof}
%By Lemma \ref{l.FF} and the proof of Proposition \ref{p.isom}, an Euler-symmetric projective variety $Z \subset \BP V$ is defined in an inhomogeneous coordinate system $(z^{(i)}_j, 1\leq i \leq r, 1 \leq j \leq n_j)$ by equations
%$$z^{(i)}_j = h^i_j (z_1, \ldots, z_n), \ 2 \leq i \leq r$$ for a collection of linearly independent  homogeneous polynomials $h^{(i)}_j$ of degree $i$.
%Then in terms of homogeneous coordinates $(z_0, z^{(i)}_j)$ on $\BP V$, the homogeneous ideal of the projective variety $Z$ contains $${\rm codim} Z = \dim \BP V - n$$ homogeneous polynomials
%$$z_j^{(i)} z_0^{i-1} - h^i_j(z_1, \ldots, z_n)\ 2 \leq i \leq r, q \leq j \leq n_i.$$
%If $Z \subset \BP V$ is a complete intersection, these homogeneous polynomials form a basis of the ideal of $Z$.
%Then the linear subspace $\{ z_0=z_1=\cdots=z_n=0\}$ of dimension $\BP V -n$ is contained in $Z$. But this is possible only if it is a point. Thus $Z$ is a hypersurface and  defined by a single quadratic equation.
%\end{proof}

\section{Euler-symmetric projective variety determined by a symbol system}

\begin{definition}\label{d.prolong}
Let $W$ be a vector space. For $w \in W$, the contraction homomorphism
$\iota_w: \Sym^{k+1} W^* \to \Sym^k W^*$  sends $\varphi \in \Sym^{k+1} W^*$ to
$\iota_w \varphi \in \Sym^k W^*$  defined by
$$ \iota_w\varphi (w_1, \ldots, w_k) = \varphi (w, w_1, \ldots, w_k)$$ for any
$w_1, \ldots, w_k \in W$. By convention, we define $\iota_w(\Sym^0 W^*) =0$.
For a subspace $F \subset \Sym^k W^*$  of symmetric $k$-linear forms on $W$, define its prolongation ${\bf prolong}(F) \subset \Sym^{k+1} W^*$ as the subspace consisting of
symmetric $(k+1)$-linear forms $\varphi $ on $W$ satisfying
$\iota_w \varphi \in F$ for any $w \in W$, i.e.,
$${\bf prolong}(F) := \bigcap_{w \in W} \iota_w^{-1}(F).$$ \end{definition}

\begin{definition}\label{d.symbol}
Let $W$ be a vector space. Fix a natural number $r$.  A subspace $${\bf F} = \oplus_{k \geq 0} F^k \subset \oplus_{k \geq 0} \Sym^k W^*$$  with
$$F^0 = \C = \Sym^0 W^*, F^1 = W^*,  F^r \neq 0, \mbox{ and } F^{r+i} =0 \mbox{ for all } i \geq 1,$$
is called a {\em symbol system of rank} $r$,  if $F^{k+1} \subset {\bf prolong}(F^{k})$ for each $1 \leq k \leq r$, or equivalently, if $\iota_w {\bf F} \subset {\bf F}$ for any $w \in W$. \end{definition}

Natural examples of symbol systems are provided by the following classical result due to E. Cartan (p.68 of \cite{LM03} or  Exercise 3.5.10 of \cite{IL}).

\begin{theorem}\label{t.Cartan}
Let $Z \subset \BP V$ be a nondegenerate subvariety and let $x \in Z$ be a general point.
Then the system of fundamental forms ${\bf F}_x = \oplus_{k \geq 0} F^k_x$ is a symbol system of rank $r$ for some natural number $r \geq 1$. In particular,   we have $n_i \geq 1$ and
$m_i = i$ for all $1 \leq i \leq r$ in Lemma \ref{l.FF}.
\end{theorem}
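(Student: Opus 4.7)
The plan is to reduce both assertions to one key inclusion, the prolongation property
$$\iota_w F^{k+1}_x \subset F^k_x \quad \text{for every } w \in T_x Z \text{ and every } k \ge 1,$$
and then to deduce the statements about $m_i$ and $n_i$ as formal consequences. Throughout I use the ``general point'' hypothesis to pick $x$ so that every jet-evaluation map $\epsilon_k\colon V^* \otimes \sO_Z \to J^k \sL$ has locally constant rank at $x$; this is standard semicontinuity, and its crucial consequence is that the subspaces $F^k_{x'} \subset \Sym^k T^*_{x'}Z$ vary holomorphically with $x'$ near $x$, which is what will legitimize a limit argument.

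To prove the prolongation inclusion, fix $P \in F^{k+1}_x$ and choose $s \in V^* \cap \ker j_x^k$ whose Taylor expansion at $x$, in the inhomogeneous coordinates of Lemma \ref{l.FF}, begins with the degree-$(k+1)$ form $P$. At a nearby point $x' = x + \epsilon w$ (inside the affine tangent space) the new coordinates are $z' = z - \epsilon w$; a multinomial expansion of $P(z' + \epsilon w)$ shows that the degree-$k$ part of $j_{x'}^k s|_Z$ equals $(k+1)\epsilon \cdot \iota_w P(z') + O(\epsilon^2)$, while each degree-$\ell$ part with $\ell \le k-1$ is $O(\epsilon^2)$. Using a holomorphic local splitting of $\epsilon_{k-1}$ coming from the constant-rank assumption, I produce $v \in V^*$ with $j_{x'}^{k-1} v = j_{x'}^{k-1} s|_Z$; then $v$ is $O(\epsilon^2)$ and $s - v \in V^* \cap \ker j_{x'}^{k-1}$, so the image of $s - v$ in $F^k_{x'}$ is the degree-$k$ part of $j_{x'}^k(s-v)$, namely $(k+1)\epsilon \cdot \iota_w P + O(\epsilon^2)$. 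Dividing by $(k+1)\epsilon$ gives $\iota_w P + O(\epsilon) \in F^k_{x'}$, and taking the limit $\epsilon \to 0$ inside the holomorphic family $\{F^k_{x'}\}$ produces $\iota_w P \in F^k_x$.

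Both remaining conclusions now drop out formally. Set $r := \max\{k : F^k_x \ne 0\}$, which is finite because $V^*$ is finite-dimensional. If $F^{k+1}_x \ne 0$ for some $k \ge 1$, any nonzero $P$ in it has at least one nonvanishing partial derivative (its degree $k+1 \ge 2$ is positive), hence a nonzero contraction $\iota_w P$, and prolongation places $\iota_w P$ into $F^k_x$, forcing $F^k_x \ne 0$. Inducting downward from $r$ gives $F^i_x \ne 0$ for all $1 \le i \le r$, which in the notation of Lemma \ref{l.FF} is precisely $m_i = i$ and $n_i = \dim F^i_x \ge 1$. The main obstacle is not the multinomial calculation but the book-keeping around the word ``general'': one must arrange for $x$ to be a point where all ranks of the $\epsilon_k$ are simultaneously locally maximal, and then genuinely construct the local holomorphic splittings of $\epsilon_{k-1}$ used in the second paragraph, so that the limit in $\epsilon$ is justified by a bona fide holomorphic family of subspaces rather than merely a pointwise statement.
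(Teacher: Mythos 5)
The paper offers no proof of Theorem \ref{t.Cartan}: it is quoted as a classical result of Cartan, with references to p.~68 of \cite{LM03} and Exercise 3.5.10 of \cite{IL}, where it is established by moving frames. So there is nothing internal to compare against, but your argument is a correct, self-contained coordinate rendering of that classical proof, and it isolates exactly the right statement: everything reduces to the inclusion $\iota_w F^{k+1}_x \subset F^k_x$ at a general point, after which the ``no gaps'' claims $m_i=i$ and $n_i\geq 1$ follow formally, since a nonzero symmetric form of positive degree admits a nonzero contraction, so $F^{k+1}_x\neq 0$ forces $F^k_x\neq 0$ and downward induction from $r$ applies. The displacement computation checks out: for $s\in V^*\cap\ker j^k_x$ with leading term $P$, expanding around the point of $Z$ lying over $\epsilon w$ makes $j^{k-1}_{x'}s=O(\epsilon^2)$ while the degree-$k$ coefficient is $(k+1)\epsilon\,\iota_w P+O(\epsilon^2)$; correcting by an $O(\epsilon^2)$ element of $V^*$ (via a holomorphic splitting, available at a point where all jet-evaluation maps have locally constant rank) and dividing by $(k+1)\epsilon$ yields elements of $F^k_{x'}$ converging to $\iota_w P$, and since $x'\mapsto F^k_{x'}$ is then a continuous family of subspaces, the limit lies in $F^k_x$ by closedness of the incidence variety over the Grassmannian. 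Two small points should be made explicit in a polished version: the displaced point $x'$ must be taken on $Z$ itself (the graph point over $\epsilon w$ in the parametrization of Lemma \ref{l.FF}), not in the embedded tangent space; and the identification of $\Sym^k T^*_{x'}Z$ with $\Sym^k T^*_x Z$ used in the limit is the one furnished by those coordinates, together with a fixed local trivialization of $L$. Compared with simply citing \cite{LM03}, your route keeps the entire proof inside the inhomogeneous-coordinate framework the paper already sets up in Lemma \ref{l.FF}, at the price of the rank-semicontinuity and splitting bookkeeping that you correctly identify as the only delicate point.
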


\begin{remark}
Lemma \ref{l.FF} shows that there is essentially no restriction on fundamental forms at nonsingular points of  projective varieties: any collection of subspaces $F^k \subset \Sym^k W^*, 2 \leq k \leq r,$ can be realized as a system of fundamental forms at some nonsingular point of some projective variety. On the other hand, Theorem \ref{t.Cartan} says that the system of fundamental forms at a general point of a projective variety cannot be arbitrary: it has to be a symbol system. Is there any other restriction? Theorem \ref{t.model} below shows that there is no other restriction. \end{remark}

\begin{lemma}\label{l.dot}
 Let  ${\bf F}$ be  a symbol system as in Definition \ref{d.symbol}. For an element $w \in W$, the $j$-th composition  $\iota_w^j := \iota_w \circ \cdots \circ \iota_w$ sends $ F^k $ to $ F^{k-j}$ for each $k$, setting $F^{-i} =0$ if $i \geq 1$. Then for each positive integer $j$ and $u, v \in W$,
 $$\iota_{v+u}^j = \sum_{l=0}^j  \binom{j}{l}  \iota_{v}^l \circ \iota_{u}^{j-l}.$$
    \end{lemma}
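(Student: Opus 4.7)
The plan is to reduce this to the standard binomial theorem by verifying the two properties that make it applicable: additivity of $w \mapsto \iota_w$ and commutativity of contractions in different directions.

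First I would observe that the map $w \mapsto \iota_w$ is linear, i.e.\ $\iota_{v+u} = \iota_v + \iota_u$ as operators on $\bigoplus_{k \geq 0} \Sym^k W^*$. This is immediate from Definition \ref{d.prolong}: for any $\varphi \in \Sym^{k+1} W^*$ and $w_1,\dots,w_k \in W$,
\[
(\iota_{v+u}\varphi)(w_1,\dots,w_k) = \varphi(v+u, w_1,\dots,w_k) = \varphi(v,w_1,\dots,w_k) + \varphi(u,w_1,\dots,w_k),
\]
by the multilinearity of $\varphi$.

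Next I would check that $\iota_u$ and $\iota_v$ commute on $\Sym^k W^*$ for any $u,v \in W$. For $\varphi \in \Sym^{k+2} W^*$, both $\iota_u \iota_v \varphi$ and $\iota_v \iota_u \varphi$ evaluated at $(w_1,\dots,w_k)$ equal $\varphi(v,u,w_1,\dots,w_k) = \varphi(u,v,w_1,\dots,w_k)$, using the symmetry of $\varphi$ in its first two arguments. Hence $\iota_u \circ \iota_v = \iota_v \circ \iota_u$.

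With these two facts in hand, the identity follows by the usual binomial expansion for commuting operators: $\iota_{v+u}^j = (\iota_v + \iota_u)^j = \sum_{l=0}^{j} \binom{j}{l} \iota_v^l \circ \iota_u^{j-l}$, which one verifies by a straightforward induction on $j$ exactly as in the scalar case. I do not expect any real obstacle; the only thing to be careful about is that the symbol system structure (in particular $\iota_w{\bf F} \subset {\bf F}$) is not actually needed for the identity itself—it is used elsewhere to know that the operators preserve ${\bf F}$—so the identity is really a formal statement about the symmetric algebra $\bigoplus_k \Sym^k W^*$.
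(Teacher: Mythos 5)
Your proposal is correct and is essentially the argument in the paper: the authors simply expand $\iota_{v+u}^j(\varphi)=\varphi(v+u,\ldots,v+u,\cdots)$ by multilinearity and collect terms using the symmetry of $\varphi$, which is exactly the content of your additivity-plus-commutativity reduction to the operator binomial theorem. Your closing remark that the identity is purely formal in $\bigoplus_k \Sym^k W^*$ and does not use $\iota_w{\bf F}\subset{\bf F}$ is accurate and consistent with the paper's proof.
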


\begin{proof}
For any $\varphi \in F^k$ and $v, u \in W$, we have
\begin{eqnarray*}
\iota_{v+u}^j (\varphi) & = &  \varphi(\underbrace{v+u, \cdots, v+u}_j,  \cdots )
\\ &=&  \sum_{l=0}^j  \binom{j}{l}  \varphi(\underbrace{v, \cdots, v}_l, \underbrace{u, \cdots, u}_{j-l},  \cdots ),
\end{eqnarray*} which implies the desired equality.
\end{proof}

\begin{definition}\label{d.model}
In Lemma \ref{l.dot}, the restriction of $\iota_w^k$ to $F^k$ determines  an element in $(F^k)^*$, which is just the map $\varphi \mapsto \varphi(w, \cdots, w)$. By abuse of notation, we will just denote it by $\iota_w^k \in (F^k)^*$, if no confusion arises.
 Define a rational map $$\phi_{\bf F} : \BP (\C \oplus W) \dasharrow \BP( \C \oplus W \oplus (F^2)^* \oplus \cdots \oplus (F^r)^*)$$
by $$[t: w] \mapsto [t^r: t^{r-1}w: t^{r-2}\ \iota_w^2:  \cdots: t\ \iota_w^{r-1}: \iota_w^r].$$
Write  $V_{\bf F} := \C \oplus W \oplus (F^2)^* \oplus \cdots \oplus (F^r)^*$. We will denote the proper image of the rational map $\phi_{\bf F}$ by $M({\bf F}) \subset \BP V_{\bf F}$.
\end{definition}

\begin{theorem}\label{t.model}
In Definition \ref{d.model}, let $o=[1:0:\cdots:0] \in M({\bf F})$ be the point $\phi_{\bf F} ([t=1: w=0]).$
\begin{itemize} \item[(i)] The natural action of the vector group $W$ on $\BP (\C \oplus W)$ can be extended to an action of $W$ on
$\BP V_{\bf F}$ preserving $M({\bf F})$ such that the orbit of $o$ is an open subset biregular to $W$. \item[(ii)] The $\C^{\times}$-action on
$W$ with weight 1 induces a $\C^{\times}$-action on $M({\bf F})$ of Euler type at $o$, making $M({\bf F})$  Euler-symmetric. \item[(iii)] The system of fundamental forms of $M({\bf F}) \subset \BP V_{\bf F}$ at $o$ is isomorphic to the symbol system ${\bf F}$. \end{itemize} Conversely, any Euler-symmetric projective variety is of the form
$M({\bf F})$ for some symbol system ${\bf F}$ on a vector space $W$. \end{theorem}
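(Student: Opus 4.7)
The argument naturally divides into constructing the $W$-action in (i), which is the principal technical step, after which (ii) and (iii) follow by direct inspection of the definitions and Lemma \ref{l.FF}, and the converse reduces to Proposition \ref{p.isom}. For (i), I would work in the affine chart $t=1$ through the map $\widetilde{\phi}_{\bf F}: W \to V_{\bf F}$, $w \mapsto (1, w, \iota_w^2, \ldots, \iota_w^r)$, and seek a linear representation $\tau: W \to \GL(V_{\bf F})$ intertwining this parameterization with translation on $W$, i.e.\ $\tau_u(\widetilde{\phi}_{\bf F}(w)) = \widetilde{\phi}_{\bf F}(w+u)$. Such a $\tau$ automatically preserves $M({\bf F})$, and its orbit through $o = \widetilde{\phi}_{\bf F}(0)$ is exactly the affine image, which is biregular to $W$ (inverted by projecting to the $W$-summand of $V_{\bf F}$) and open in $M({\bf F})$.

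The candidate is $\tau_u = \exp(\delta_u)$, where $\delta_u: V_{\bf F} \to V_{\bf F}$ is the nilpotent raising operator sending $(a_0, \ldots, a_r)$ to the tuple with $j$-th entry $j \cdot (\iota_u)^*(a_{j-1})$; the transpose $(\iota_u)^*: (F^{j-1})^* \to (F^j)^*$ makes sense precisely because ${\bf F}$ is a symbol system ($\iota_u F^j \subset F^{j-1}$). Nilpotency is clear from the grading and $[\delta_u,\delta_{u'}]=0$ from commutativity of contractions, so $u \mapsto \tau_u$ is a group homomorphism. On the $j$-th component the intertwining identity unwinds to $\sum_{l=0}^{j}\binom{j}{l}(\iota_u^{j-l})^*(\iota_w^l) = \iota_{w+u}^j$, which is the dual of Lemma \ref{l.dot}. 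Matching indices and binomial coefficients so that the identity holds on the correct graded pieces---and recognizing that the symbol system axiom is precisely what makes each iterated contraction $\iota_u^{j-l}$ land in $F^l$ so that the transposes compose correctly inside $V_{\bf F}$---is the only genuine difficulty. A brief polynomial-vanishing argument will also show that $M({\bf F})$ is nondegenerate in $\BP V_{\bf F}$, since a hyperplane containing the affine image would give an element of ${\bf F}$ whose evaluation at every $w$ vanishes, forcing each graded piece to vanish separately.

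Part (ii) then follows by grading $V_{\bf F}$ with weight $k$ on $(F^k)^*$: the identity $\iota_{sw}^k = s^k \iota_w^k$ makes $\phi_{\bf F}$ equivariant for $w \mapsto sw$, the point $o$ is fixed, and the induced action on $T_o M({\bf F}) \cong W$ is scalar multiplication, verifying the Euler condition. For (iii), the parameterization shows $M({\bf F})$ near $o$ is cut out by $a_k = \iota_w^k$; picking a basis $\{\varphi^k_j\}$ of $F^k$ to coordinatize $(F^k)^*$, the defining functions read $h^k_j(w) = \varphi^k_j(w, \ldots, w)$, already homogeneous of degree $k$ and spanning $F^k \subset \Sym^k W^*$, so Lemma \ref{l.FF} identifies the $k$-th fundamental form at $o$ with $F^k$. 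For the converse, given Euler-symmetric $Z \subset \BP V$ and a general $x \in Z$, the system ${\bf F}_x$ is a symbol system by Theorem \ref{t.Cartan}, and the Euler $\C^{\times}$-action exhibits each $F^k_x$ as a weight-$k$ eigenspace. By (ii)--(iii), $M({\bf F}_x)$ is Euler-symmetric with the same fundamental data at $o$; since both $Z$ and $M({\bf F}_x)$ are nondegenerate in projective spaces of the same dimension $1 + \dim Z + \sum_{k \geq 2} \dim F^k_x$, fixing a linear isomorphism of the ambient spaces and applying Proposition \ref{p.isom} produces the desired projective isomorphism.
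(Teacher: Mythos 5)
Your proof is correct and follows the same overall architecture as the paper's: construct the translation action of $W$ explicitly on $V_{\bf F}$, check that it intertwines $\phi_{\bf F}$ with translation via Lemma \ref{l.dot}, read off (ii) from the weight grading of $V_{\bf F}$, read off (iii) from the graph description together with Lemma \ref{l.FF}, and reduce the converse to Proposition \ref{p.isom} with Theorem \ref{t.Cartan} guaranteeing that ${\bf F}_x$ is a symbol system. The one place where you genuinely diverge is the key step of verifying that $u \mapsto \tau_u$ is a group homomorphism. The paper writes the operator coordinatewise as $g_v^z \cdot f^k = \sum_{l=2}^k\binom{k}{l} f^l\circ\iota_v^{k-l} + k\,\iota_w\circ\iota_v^{k-1}+t\,\iota_v^k$ --- which is exactly what your $\exp(\delta_v)$ expands to, since $\delta_v^m(a)_j=\frac{j!}{(j-m)!}(\iota_v^m)^*a_{j-m}$ --- and then checks $g_{u+v}=g_u\circ g_v$ by a page-long manipulation of double sums of binomial coefficients. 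Recognizing the action as $\exp$ of a family of commuting nilpotent operators depending linearly on $u$ makes additivity immediate, and it isolates precisely where the symbol-system axiom $\iota_u F^j\subset F^{j-1}$ enters (namely, in making $(\iota_u)^*$ well defined on $V_{\bf F}$); this is a real simplification of the paper's computation. Your observation that $M({\bf F})$ is nondegenerate in $\BP V_{\bf F}$ is also a worthwhile addition that the paper leaves implicit, although it is needed for Definition \ref{d.FF} to apply in (iii). The only small omission: in (ii), after verifying the Euler condition at $o$, you should note (as the paper does) that conjugating by the $W$-action of (i) transports this $\C^{\times}$-action to every point of the open orbit, so that a general point of $M({\bf F})$ admits an action of Euler type.
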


\begin{proof}
 Viewing an element  $f^{k-j} \in (F^{k-j})^*$, $k>j \geq 0$,  as a linear map $F^{k-j} \to \mathbb{C},$ we define the composition $f^{k-j} \circ \iota_w^j$ as an element in $(F^k)^*$.
Using this, we  define an action of $W$ on $\BP V_{\bf F}$ as follows.
For $v \in W$ and $z= [t:w:f^2:\cdots : f^r] \in \BP V_{\bf F}$ with $f^k \in (F^k)^*$, define
$$
g_v \cdot z :=
[t: w + t v: g^z_v \cdot f^2: \cdots: g^z_v \cdot f^r]$$ where for each $2 \leq k \leq r$,  $$g^z_v \cdot f^k:=   \sum_{l=2}^k \binom{k}{l}  f^l \circ \iota_v^{k-l}+ k \iota_w \circ \iota_v^{k-1} + t \iota_v^{k}.$$
We have $g_0 \cdot z = z$ for all $z \in \BP V_{\bf F}$.

We claim $g_{v+u} \cdot z = g_{u} \cdot ( g_{v} \cdot z)$ for all $u, v \in W$.  By definition,
\begin{eqnarray*}
g_{u} \cdot( g_{v} \cdot z)   &= & g_{u} \cdot[t: w + t v: \cdots:  g^z_v \cdot f^k : \cdots]
\\ & = & [t: w + tv + tu: \cdots : g^{(g_v \cdot z)}_u \cdot (g^z_v \cdot f^k) : \cdots ] \end{eqnarray*} where
\begin{eqnarray*}
g^{(g_v \cdot z)}_{u} \cdot ( g^z_{v} \cdot f^k) &=&  \sum_{j=2}^k \binom{k}{j} \big( \sum_{l=2}^{j} \binom{j}{l} f^{l} \circ \iota_{v}^{j-l}+ j \iota_w \circ \iota_v^{j-1} + t \iota_v^j\big) \circ \iota_{u}^{k-j} \\
& & + k \iota_{w+tv} \circ \iota_u^{k-1} + t \iota_u^k. \end{eqnarray*}
By Lemma \ref{l.dot}, this can be simplified to
\begin{eqnarray*}
& &  \sum_{j=2}^k  \sum_{l=2}^{j} \binom{k}{j} \binom{j}{l} f^{l} \circ \iota_{v}^{j-l} \circ \iota_{u}^{k-j}  \\ & &  + \sum_{j=2}^k \binom{k}{j} (j \iota_w \circ \iota_v^{j-1}\circ \iota_{u}^{k-j} + t \iota_v^j \circ \iota_{u}^{k-j})  \  + k \iota_{w+tv} \circ \iota_u^{k-1} + t \iota_u^k \\
& =& \sum_{j=2}^k  \sum_{l=2}^{j} \binom{k}{j} \binom{j}{l} f^{l} \circ \iota_{v}^{j-l} \circ \iota_{u}^{k-j} \\ & &  + k \sum_{j=0}^{k-1} \binom{k-1}{j} \iota_w \circ \iota_v^j \circ \iota_u^{k-1-j} + t \sum_{i=0}^k \binom{k}{i} \iota_v^i \circ \iota_u^{k-i} \\
&=& \sum_{j=2}^k  \sum_{l=2}^{j} \binom{k}{j} \binom{j}{l} f^{l} \circ \iota_{v}^{j-l} \circ \iota_{u}^{k-j} + k \iota_w \circ \iota_{v+u}^{k-1} + t \iota_{v+u}^k \\
&=& \sum_{l=2}^k f^l \circ (\sum_{j=l}^k \binom{k}{j} \binom{j}{l}  \iota_{v}^{j-l} \circ \iota_{u}^{k-j})+ k \iota_w \circ \iota_{v+u}^{k-1} + t \iota_{v+u}^k \\
&=& \sum_{l=2}^k f^l \circ (\sum_{i=0}^{k-l} \binom{k}{l} \binom{k-l}{i} \iota_v^i \circ \iota_u^{k-l-i})+ k \iota_w \circ \iota_{v+u}^{k-1} + t \iota_{v+u}^k \\
&=& \sum_{l=2}^k \binom{k}{l}  f^l \circ \iota_{v+u}^{k-l}+ k \iota_w \circ \iota_{v+u}^{k-1} + t \iota_{v+u}^{k} \\
& = & g^z_{u+v} \cdot f^k
\end{eqnarray*}
This proves the claim, which verifies that $v \mapsto g_v$ is an action of $W$ on $\BP V_{\bf F}$.

Now we show that this action of $W$ preserves $M({\bf F})$.
Take $[t:w] \in \BP(\mathbb{C} \oplus W)$ general and $v \in W$, then $g_v \cdot \phi_{{\bf F}}([t:w])$ is equal to
\begin{eqnarray*}
& & g_v \cdot ([t^r: t^{r-1}w: t^{r-2}\ \iota_w^2:  \cdots: t^{r-k} \ \iota_w^k: \cdots: t\ \iota_w^{r-1}: \iota_w^r])\\
&=& [t^r: t^{r-1} (w+tv): \cdots: g^{\phi_{{\bf F}}([t:w])}_v \cdot  (t^{r-k} \iota_w^k): \cdots]
\end{eqnarray*}  where
$$g^{\phi_{{\bf F}}([t:w])}_v \cdot  (t^{r-k} \iota_w^k) =
  \sum_{l=2}^k \binom{k}{l}  t^{r-l} \iota_w^l \circ \iota_v^{k-l}+ k \iota_{t^{r-1}w} \circ \iota_v^{k-1} + t^r \iota_v^{k}.$$ By Lemma \ref{l.dot}, this is equal to $t^{r-k} \iota_{w+tv}^k$, yielding
  $$ g_v \cdot \phi_{{\bf F}}([t:w]) =  [t^r: t^{r-1} (w+tv): \cdots: t^{r-k} \ \iota_{w+tv}^k : \cdots] =  \phi_{{\bf F}}([t: w+tv]).$$
This implies that the action of $W$ on $\BP V_{\bf F}$ preserves $M({\bf F})$ and $\phi_{{\bf F}}$ is $W$-equivariant. Note that $\phi_{{\bf F}}$ maps
$W \subset \BP(\mathbb{C} \oplus W)$ biregularly to an open subset of $M({\bf F})$. This completes the proof of (i).

Consider the $\mathbb{C}^{\times}$-action on  $\BP(\mathbb{C} \oplus W)$  given by $\lambda \cdot [t:w] = [t: \lambda w]$.
This action induces a $\mathbb{C}^{\times}$-action on $\BP V_{\bf F}$ by
$$
\lambda \cdot [t:w:f^2:\cdots:f^r] = [t: \lambda w: \lambda^2 f^2: \cdots: \lambda^r f^r].
$$
It follows that $\lambda \cdot \phi_{{\bf F}}([t:w]) = \phi_{{\bf F}}(\lambda \cdot [t:w])$, hence $M({\bf F})$ is $\mathbb{C}^{\times}$-invariant.
This $\mathbb{C}^{\times}$-action has an isolated fixed point at $o$ and it acts on $T_o M({\bf F})$ as the scalar multiplication. Thus it is a $\mathbb{C}^{\times}$-action of Euler type at $o$.  Now we can use the $W$-action to translate this $\mathbb{C}^{\times}$-action to any point in this open orbit, hence $M({\bf F})$ is Euler-symmetric, proving (ii).

On an open subset, the variety $M({\bf F})$ is the graph of the map $w \mapsto (\iota^2_w, \cdots, \iota^r_w)$. By Lemma \ref{l.FF}, this shows that  the system of fundamental forms of $M({\bf F}) \subset \BP V_{\bf F}$ is isomorphic to the symbol system ${\bf F}$, proving (iii).

Finally,  for an Euler-symmetric projective variety $Z \subset \BP V$, let ${\bf F}$ be the symbol system isomorphic to the system of fundamental forms of $Z$ at a general point. Then $Z \subset \BP V$ is isomorphic to  $M({\bf F}) \subset \BP V_{\bf F}$ by Proposition  \ref{p.isom}.
\end{proof}

\begin{definition}\label{d.rank}
An Euler-symmetric projective variety $Z \subset \BP V$ has {\em rank} $r$ if its $(r+1)$-st fundamental form at a general point is zero. Equivalently, the Euler-symmetric projective variety $M({\bf F})$ associated to a symbol system ${\bf F}$ has rank $r$ if the symbol system ${\bf F}$ has rank $r$. \end{definition}

\begin{example}\label{e.rank2}  A symbol system of rank 2 is just a subspace of $\Sym^2 W^*$.  In this case,  our $M({\bf F})$ is reduced to the varieties constructed in Section 3 of \cite{La}. Euler-symmetric projective varieties of rank 2 are exactly quadratically symmetric varieties in \cite{FH2}. A complete classification of nonsingular Euler-symmetric varieties of rank 2 are given in  Theorem 7.8 of \cite{FH2}. \end{example}

\begin{example}\label{e.P}
For a nonzero homogeneous polynomial $P \in \Sym^r W^*$ of degree $r$, we can define the symbol system ${\bf F}_P$ of rank $r$ by setting $F^r = \langle P \rangle$  and  $$F^{r-j} = \langle \iota_{w_1} \circ \cdots \circ \iota_{w_j} P, \ w_1, \ldots, w_j \in W\rangle$$ for all $1 \leq j \leq r-2.$ When $r=3$ and $P$ is nondegenerate in a suitable sense, the variety $M({\bf F}_P)$ is exactly the projective Legendrian variety studied in Section 4.3 of \cite{LM}. \end{example}

\begin{example} \label{c.dim1}
An Euler-symmetric projective curve of rank $r$ is the rational normal curve in $\mathbb{P}^r$.
\end{example}

\begin{example}
 By Theorem \ref{t.model}, a nonsingular Euler-symmetric projective surface is biregular to a successive blow-ups of $\mathbb{P}^2$ or a Hirzebruch surface $\mathbb{F}_n$ along fixed points of the $\mathbb{G}_a^2$-action.
Their classification as projective surfaces does not seem straight-forward.
\end{example}

\begin{example}\label{e.HSS}
A rational homogeneous projective variety is Euler-symmetric if and only if it is Hermitian symmetric. In fact, if $G/P$ is Euler-symmetric, then  by Theorem \ref{t.model}, it is  an equivariant compactification of a vector group, hence it is Hermitian symmetric by \cite{A}. Conversely, all Hermitian symmetric spaces, under  projective embeddings equivariant with respect to their automorphism groups, are  Euler-symmetric.
\end{example}

\begin{example} \label{e.CI}
A nondegenerate hypersurface is Euler-symmetric if and only if it is a nondegenerate hyperquadric. In fact, if $M({\bf F}) \subset \BP(V_{\bf F})$ is a hypersurface, then ${\bf F}$ is of rank 2 and $F^2 = \langle Q \rangle$. If we write the coordinates of $\BP V_{\bf F}$ as $[z_0:z_1:\cdots: z_n: u]$, then $M({\bf F})$ is the hyperquadric $z_0 u = Q(z_1, \cdots, z_n)$. Conversely, if $Z \subset \BP V$ is a nondegenerate hyperqaudric, then up to a change of coordinates, $Z$ is given by the equation $z_0 u = Q(z_1, \cdots, z_n)$, then it is exactly the $M({\bf F})$ with ${\bf F}$ of rank 2 and $F^2 = \langle Q \rangle$.
\end{example}

\section{Base loci of a symbol system}

By Theorem \ref{t.model}, there is a natural 1-to-1 correspondence between  symbol systems and  Euler-symmetric projective varieties. It is interesting to investigate how the algebraic properties of a symbol system are reflected in the geometric properties of the corresponding Euler-symmetric projective variety, and vice versa. In this section, we look at this problem through the base loci of a symbol system and rational curves on the Euler-symmetric projective variety.

\begin{definition}
For a symbol system ${\bf F} = \oplus_{k\geq 0} F^k$ of rank $r$, define the projective algebraic subset
${\bf Bs}(F^k) \subset \BP W$ by the affine cone in $W$ $$\{
w \in W | \varphi(w, \cdots, w) = 0 \mbox{ for all } \varphi \in F^k\}.$$
By the definition of a symbol system, we have the inclusion $
 {\bf Bs}(F^k) \subset {\bf Bs}(F^{k+1})$ for each $k \in \N$.
The {\em order} of the symbol system ${\bf F}$ is the largest natural number $m$ such that
${\bf Bs}(F^m) = \emptyset$. As ${\bf Bs}(F^1) = {\bf Bs}(W^*) = \emptyset$ and
${\bf Bs}(F^{r+1}) = \BP W$, the order is less than of equal to the rank $r$ of the symbol system. The {\em base loci} of ${\bf F}$ is the nonempty projective algebraic subset
${\bf Bs}({\bf F}) :=  {\bf Bs}(F^{m+1})$  in $\BP W$, where $m$ is the order of ${\bf F}$.   \end{definition}

%For a projective subvareity $Y \subset \BP^N$, the $k$-th secant variety ${\rm Sec}^k(Y) \subset \BP^N$ is the closure of the union of the linear spaces spanned by $k+1$ points in $Y$. The following result is from \cite{LM03} (Proposition 2.1 and Corollary 3.6).
%\begin{proposition} \label{p.secant}
%Let $M({\bf F}) \subset \BP V_{\bf F}$ be an Euler-symmetric projective variety of order 1 and rank $r$. Then for all $2 \leq k \leq r$, we have
%$$
%{\rm Sec}^{k-2} ({\bf Bs}(F^2)) \subseteq {\bf Bs}(F^k).
%$$
%The equality holds for irreducible Hermitian symmetric spaces.
%\end{proposition}

\begin{proposition} \label{p.Veronese}
If the order of ${\bf F}$ is equal to the rank of ${\bf F}$,  then the normalization of $M({\bf F})$ is projective space.
 If furthermore $M({\bf F})$ is nonsingular, then $M({\bf F})$ is a biregular projection of the $r$-th Veronese variety. \end{proposition}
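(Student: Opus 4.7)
The plan is to factor $\phi_{\bf F}$ as the $r$-th Veronese embedding followed by a linear projection, and to use the hypothesis to show that the center of that projection avoids the Veronese variety. Set $U := \C \oplus W$ and consider the $r$-th Veronese map $v_r : \BP U \to \BP \Sym^r U$. Fixing a generator $t$ of $\C$ gives the canonical decomposition $\Sym^r U = \bigoplus_{k=0}^{r} \Sym^k W$, under which
$$v_r([t:w]) = [t^r : t^{r-1} w : t^{r-2} w^{\otimes 2} : \cdots : w^{\otimes r}].$$
The inclusions $F^k \subset \Sym^k W^*$ dualize to surjections $\Sym^k W \twoheadrightarrow (F^k)^*$, $w^{\otimes k} \mapsto \iota_w^k$, which assemble into a linear surjection $\pi : \Sym^r U \twoheadrightarrow V_{\bf F}$ that is the identity on the $\C$ and $W$ summands. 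Comparing with Definition \ref{d.model}, one sees that $\phi_{\bf F} = [\pi] \circ v_r$, where $[\pi]$ is the linear projection with center $\BP \ker \pi$.

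The key step is to show that $\BP \ker \pi$ is disjoint from $v_r(\BP U)$. The kernel is $\ker \pi = \bigoplus_{k=2}^r (F^k)^\perp$. If $v_r([t:w]) \in \BP \ker \pi$, then vanishing of the $t^r$-component forces $t = 0$ and hence $w \neq 0$, while the condition $w^{\otimes r} \in (F^r)^\perp$ means $\varphi(w,\ldots,w) = 0$ for every $\varphi \in F^r$, i.e.\ $w \in {\bf Bs}(F^r)$. Since the order of ${\bf F}$ equals its rank $r$, we have ${\bf Bs}(F^r) = \emptyset$, a contradiction. Thus $\phi_{\bf F}$ extends to a morphism on the whole of $\BP U \cong \BP^n$ (with $n := \dim W$), and since a linear projection whose center misses a projective subvariety restricts to a finite morphism on it, $\phi_{\bf F} : \BP^n \to M({\bf F})$ is finite.

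Birationality is immediate: the partial projection onto the first two blocks of coordinates, $[t^r : t^{r-1}w : \cdots] \mapsto [t:w]$, inverts $\phi_{\bf F}$ on the open set $\{t \neq 0\}$. A finite birational morphism from a normal variety is the normalization map, so the normalization of $M({\bf F})$ is $\BP^n$, which proves the first claim. If $M({\bf F})$ is moreover nonsingular, then it is already normal, so Zariski's Main Theorem upgrades $\phi_{\bf F}$ to a biregular isomorphism $\BP^n \xrightarrow{\sim} M({\bf F})$. Via the factorization $\phi_{\bf F} = [\pi] \circ v_r$, this is the statement that $[\pi]$ restricts to a biregular isomorphism from the $r$-th Veronese variety $v_r(\BP^n)$ onto $M({\bf F})$, giving the second claim.

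The only nontrivial bookkeeping lies in setting up the factorization $\phi_{\bf F} = [\pi]\circ v_r$ and identifying $\ker \pi$ in such a way that the base locus condition ${\bf Bs}(F^r) = \emptyset$ translates directly into disjointness from the Veronese; once these are in place, the geometric conclusions follow from standard facts about linear projections and Zariski's Main Theorem.
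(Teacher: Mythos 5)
Your proof is correct and follows essentially the same route as the paper: both reduce everything to the observation that the base locus of $\phi_{\bf F}$ is ${\bf Bs}(F^r)$, which is empty when order equals rank, so that $\phi_{\bf F}$ becomes a finite birational morphism and hence the normalization, with the Veronese factorization giving the second claim. The only difference is presentational --- you set up the factorization through the Veronese at the outset and deduce emptiness of the base locus and finiteness from the projection center missing $v_r(\BP U)$, whereas the paper asserts the base-locus identification directly and invokes the Veronese only at the end.
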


 \begin{proof}
 Note that the birational map $\phi_{\bf F} : \BP (\C \oplus W) \dasharrow M({\bf F})$ in Definition \ref{d.model} has base locus ${\bf Bs}(F^r)$. Our assumption says that ${\bf Bs}(F^r) = \emptyset$. Thus $\phi_{\bf F}$ is a birational  morphism, which is finite over its image because it contracts no curves on $\BP (\C \oplus W)$. This implies that $\phi_{\bf F}$ is the normalization map. If $M({\bf F})$ is nonsingular, then $\phi_{\bf F}$ is an isomorphism. Note that $M({\bf F}) \subset \BP(V_{\bf F})$
 is the linear projection from the $r$-th Veronese embedding $\BP(\mathbb{C} \oplus W) \to \BP(\Sym^r (\mathbb{C} \oplus W))$, which is biregular as $\phi_{\bf F}$ is an isomorphism.
\end{proof}

\begin{proposition} \label{p.orbit}
Let $A_o \subset \GL(V_{\bf F})$ be a multiplicative subgroup corresponding to a $\mathbb{C}^{\times}$-action on $M({\bf F})$ of Euler type at $o:=[1:0:\cdots:0] \in M({\bf F})$, from Theorem \ref{t.model} (ii). In terms of the natural identification of  $T_o M({\bf F})$ and $W$ from Definition \ref{d.model} ( uniquely determined up to a scalar multiple), we have  the following.  \begin{itemize} \item[(1)] If the $A_o$-stable curve on $M({\bf F})$ through $o$  in the direction of a nonzero vector $w \in W$ has degree $\leq k$, then $w \in {\bf Bs}(F^{k+1})$. \item[(2)] Let $m$ be the order of ${\bf F}$.  Then
for any nonzero vector $w$ belonging to ${\bf Bs}(F^{m+1})$, the $A_o$-stable curve on $M({\bf F})$ through $o$ in the direction of $w$ is a rational normal curve of degree $m$. \end{itemize} It follows that the rank of ${\bf F}$ is the maximal degree of $A_o$-stable curves through $o$ and the order of ${\bf F}$ is the minimal degree of $A_o$-stable curves through $o$. \end{proposition}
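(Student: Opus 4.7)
The plan is to make the $A_o$-orbit explicit via Theorem \ref{t.model} and then read off the degree from the parametrization. By Theorem \ref{t.model}(i), the open orbit $W\cdot o$ is parametrized biregularly by $w\mapsto \phi_{\bf F}([1:w])$, providing the identification $T_oM({\bf F})\cong W$. Combining this with the Euler-type $A_o$-action from part (ii), we have $\lambda\cdot\phi_{\bf F}([1:w])=\phi_{\bf F}([1:\lambda w])=[1:\lambda w:\lambda^2\iota_w^2:\cdots:\lambda^r\iota_w^r]$, so the $A_o$-stable curve through $o$ in the direction of $w\neq 0$ is the closure of this orbit, parametrized by $\lambda\in\mathbb{P}^1$.

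Next I would identify when $\iota_w^k$ vanishes. By Definition \ref{d.model}, $\iota_w^k\in (F^k)^*$ is the functional $\varphi\mapsto\varphi(w,\ldots,w)$, so $\iota_w^k=0$ exactly when $w$ lies in the affine cone of ${\bf Bs}(F^k)$. Let $k_{\max}(w)$ denote the largest $k$ with $\iota_w^k\neq 0$; since $\iota_w^1=w\neq 0$ this is well defined. The parametrization then reads $[1:\lambda w:\lambda^2\iota_w^2:\cdots:\lambda^{k_{\max}(w)}\iota_w^{k_{\max}(w)}:0:\cdots:0]$, whose coordinates are polynomials in $\lambda$ of degrees $0,1,\ldots,k_{\max}(w)$ with no common factor (the zeroth coordinate is a nonzero constant); hence it defines a birational morphism $\mathbb{P}^1\to \BP V_{\bf F}$ whose image is a rational curve of degree $k_{\max}(w)$.

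With these observations, (1) is immediate: if the image has degree at most $k$, then $k_{\max}(w)\leq k$, hence $\iota_w^{k+1}=0$ and $w\in{\bf Bs}(F^{k+1})$. For (2), take a nonzero $w\in{\bf Bs}(F^{m+1})$. Since ${\bf Bs}(F^m)=\emptyset$ we have $\iota_w^m\neq 0$, while the chain ${\bf Bs}(F^{m+1})\subset{\bf Bs}(F^k)$ for $k\geq m+1$ forces $\iota_w^k=0$ for all $k>m$. Thus $k_{\max}(w)=m$, so the curve has degree $m$ and lies in the projective span of the $m+1$ linearly independent vectors contributed by the distinct direct summands $\mathbb{C},W,(F^2)^*,\ldots,(F^m)^*$ of $V_{\bf F}$; being a degree-$m$ birationally parametrized rational curve in a $\mathbb{P}^m$, it is a rational normal curve. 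The final sentence is then automatic: $F^r\neq 0$ implies $\iota_w^r\neq 0$ for generic $w$, so the maximum of $k_{\max}(w)$ is $r$, while ${\bf Bs}(F^m)=\emptyset$ forces $k_{\max}(w)\geq m$ for every nonzero $w$, with equality attained precisely on ${\bf Bs}(F^{m+1})$.

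The main point that needs care is the degree calculation together with the conclusion in (2) that the curve is a rational normal curve; this rests on the coordinates living in distinct direct summands of $V_{\bf F}$, so that the orbit genuinely spans the intermediate $\mathbb{P}^m$ rather than a smaller linear subspace. Once this is verified, the rest of the argument is a direct translation of Definition \ref{d.model}, the $A_o$-action in Theorem \ref{t.model}(ii), and the definition of ${\bf Bs}(F^k)$.
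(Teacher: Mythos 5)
Your proposal is correct and follows essentially the same route as the paper: the paper's proof simply writes down the orbit $[1:\lambda w:\lambda^2\iota_w^2:\cdots:\lambda^r\iota_w^r]$ from Definition \ref{d.model} and Theorem \ref{t.model}(ii) and declares that (1) and (2) follow immediately. You have merely filled in the (correct) details of the degree computation and the rational-normal-curve identification that the paper leaves implicit.
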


\begin{proof} From Definition \ref{d.model} and  Theorem \ref{t.model} (ii), the $A_o$-stable curve in the direction of a nonzero vector $w \in W$
is  the closure of the curve
$$
[1: \lambda w: \lambda^2 \iota_w^2: \cdots: \lambda^r \iota_w^r],  \ \lambda \in \mathbb{C}^{\times}.
$$
Both (1) and (2) follow immediately from the above expression.
\end{proof}

\begin{proposition} \label{p.vmrt}
If $M({\bf F})$ is nonsingular, then the base loci ${\bf Bs}({\bf F}) $ is nonsingular. \end{proposition}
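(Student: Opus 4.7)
The plan is to apply the Białynicki-Birula (BB) decomposition of the smooth variety $M({\bf F})$ with respect to the Euler $\C^{\times}$-action from Theorem \ref{t.model}(ii). Let $\widehat{X} \subset W$ denote the affine cone over ${\bf Bs}({\bf F})$; since $\widehat{X} \setminus \{0\} \to {\bf Bs}({\bf F})$ is a principal $\C^{\times}$-bundle, it suffices to show $\widehat{X} \setminus \{0\}$ is smooth.

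First, I would build a geometric model for $\widehat{X} \setminus \{0\}$ inside $M({\bf F})$. From the proof of Theorem \ref{t.model} and Proposition \ref{p.orbit}, for $w \in W \setminus \{0\}$ one has
$$\lim_{\lambda \to \infty} \lambda \cdot \phi_{\bf F}([1:w]) \;=\; [0:\cdots:0:\iota_w^{k(w)}:0:\cdots:0],$$
where $k(w) := \max\{k : \iota_w^k \neq 0\}$. Since $m$ is the order of ${\bf F}$ we have ${\bf Bs}(F^m) = \emptyset$, so $k(w) = m$ exactly when $w \in \widehat{X} \setminus \{0\}$, and in that case $\iota_w^m \neq 0$. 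Thus $w \mapsto [\iota_w^m]$ defines a morphism $\pi : \widehat{X} \setminus \{0\} \to \BP(F^m)^*$ landing in the closed subvariety $F_{\mathrm{tot}} := M({\bf F}) \cap \BP(F^m)^*$.

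Next, I would invoke BB. Smoothness of $M({\bf F})$ forces the $\C^{\times}$-fixed locus to be smooth, so $F_{\mathrm{tot}} = \bigsqcup_i F_i$ is a disjoint union of smooth connected components. For each $i$, the BB minus-cell $C_i^- := \{p \in M({\bf F}) : \lim_{\lambda\to\infty} \lambda \cdot p \in F_i\}$ is a smooth locally closed subvariety of $M({\bf F})$ (an affine bundle over $F_i$), and the explicit formula above identifies $C_i^- \cap W$ with $\pi^{-1}(F_i)$. Because the $F_i$ are open-closed in $F_{\mathrm{tot}}$, the sets $\pi^{-1}(F_i)$ are open-closed in $\widehat{X} \setminus \{0\}$; each is open in the smooth $C_i^-$, hence smooth. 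Therefore $\widehat{X} \setminus \{0\} = \bigsqcup_i \pi^{-1}(F_i)$ is smooth, and so is ${\bf Bs}({\bf F})$.

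The main point requiring care is the identification $C_i^- \cap W = \pi^{-1}(F_i)$ at the scheme-theoretic level, together with the verification that this gives the reduced structure on $\widehat{X} \setminus \{0\}$ at each point. Both follow from the explicit form of $\pi$ and the openness of $W$ in $M({\bf F})$; beyond this the argument is a mechanical application of BB, with no VMRT or rational-curve deformation theory needed.
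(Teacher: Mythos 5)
Your proof is correct, but it takes a genuinely different route from the paper's. You read everything off the explicit parametrization: the limit formula $\lim_{\lambda\to\infty}\lambda\cdot\phi_{\bf F}([1:w])=[\iota_w^{k(w)}]$ identifies the punctured affine cone $\widehat{X}\setminus\{0\}$ over ${\bf Bs}({\bf F})$ with the union of the sets $C_i^-\cap W$, where the $C_i^-$ are the Bialynicki-Birula minus-cells over the fixed components lying in $\BP(F^m)^*$; each such set is open in a smooth cell and open-closed in $\widehat{X}\setminus\{0\}$, so smoothness follows at once. The paper instead argues through minimal rational curves: it takes the family $\sR^k$ of minimal-degree rational curves through $o$, uses the splitting type $\sO(1)^p\oplus\sO^q$ of the normal bundle of a general member (Proposition 6 of \cite{HM98}) to show that every member of $\sR^k$ is $A_o$-stable, and only then invokes Bialynicki-Birula to conclude that the set of tangent directions to these curves --- which equals ${\bf Bs}({\bf F})$ by Proposition \ref{p.orbit} --- is biregular to components of the fixed locus. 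Your argument is more elementary, avoiding the deformation theory of rational curves entirely, exactly as you claim; what the paper's route buys in exchange is the additional geometric fact that ${\bf Bs}({\bf F})$ is the variety of tangent directions of minimal rational curves through $o$ (essentially the VMRT at $o$), which explains the proposition's role in the paper and its use in Section 5. The only points in your write-up worth spelling out are routine: the weight bookkeeping showing that $k(w)=m$ exactly on $\widehat{X}\setminus\{0\}$ (here one uses that $\iota_w^{m+1}=0$ forces $\iota_w^j=0$ for all $j>m$ by the symbol-system property, and that ${\bf Bs}(F^m)=\emptyset$ gives $\iota_w^m\neq 0$ so $\pi$ is everywhere defined), and the observation that $o$ itself flows to the weight-zero component, so $o\notin C_i^-$ and $C_i^-\cap W$ avoids the origin.
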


\begin{proof}
Let $A_o \subset \GL(V_{\bf F})$ be as in Proposition \ref{p.orbit}.
 Let $\sR^k$ be the set of all rational curves of degree $k$ through $o$ on
 $M({\bf F})$. Assume that $\sR^{k-1} = \emptyset$ and $\sR^k \neq \emptyset$.
 Then $\sR^k$ is a complete variety and $A_o$ acts on each irreducible component of $\sR^k$ with nonempty fixed points.
 From Proposition \ref{p.orbit}, the order of ${\bf F}$ is $k$ and a general member of each irreducible component of $\sR^k$ is a rational normal curve of degree $k$.

 Since members of $\sR^k$ are rational curves of minimal degree through $o$ on the nonsingular projective variety $M({\bf F})$, a general member $C$ of each irreducible component  of  $\sR^k$ has normal bundle of type  $\sO(1)^p \oplus \sO^q$ for some nonnegative integers $p$ and $q$ (e.g. Proposition 6 in \cite{HM98}).
 We claim that $C$ is $A_o$-stable.  Otherwise, we have a nontrivial family of members $C_t, t \in A_o$, of
$\sR^k$. From the type of the normal bundle, the tangent directions of curves $C_t$ at $o$ are distinct. This is impossible because $A_o$ acts trivially on $\BP T_o M({\bf F})$.
This proves the claim.

Since $C$ is a general member of $\sR^k$, the claim implies that
 all members of $\sR^k$ are $A_o$-stable.
 From Bialynicki-Birula's structure theory (in \cite{BB}) of $\C^{\times}$-actions on nonsingular projective varieties and
the fact that members of $\sR^k$ are exactly $A_o$-stable curves of minimal degree through $o$, we see that the set of the tangent directions to members of $\sR^k$ is nonsingular, being
biregular to some components of the fixed point set of $A_o$-action on $M({\bf F})$.
By Proposition \ref{p.orbit}, this set is exactly ${\bf Bs}({\bf F})$.
 \end{proof}

\begin{definition}\label{d.saturated}
A symbol system ${\bf F}$ of rank $r$ and of order 1 is {\em saturated}
if  $$H^0(\mathbb{P}W, \mathcal{I}_{{\bf Bs}({\bf F})} \otimes \sO(2)) = F^{2} \mbox{ and } F^{k+1} = {\bf prolong}(F^k)$$ for all $k \geq 2.$ \end{definition}

\begin{proposition}\label{p.saturated}
Let ${\bf F}$ be a saturated symbol system. If the Picard group of $M({\bf F})$ is discrete (e.g. if $M({\bf F})$ is normal), then the embedding $M({\bf F}) \subset \BP V_{\bf F}$ is linearly normal. \end{proposition}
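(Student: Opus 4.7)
The plan is to identify $H^0(M({\bf F}), L)$ with a subspace of the polynomial ring $\C[W]$ by restriction to the open $W$-orbit, and to show that this subspace coincides with $V_{\bf F}^* = \C \oplus W^* \oplus F^2 \oplus \cdots \oplus F^r$ sitting naturally inside $\Sym^{\leq r} W^*$. Since the tautological map $V_{\bf F}^* \to H^0(M({\bf F}), L)$ is injective by non-degeneracy, this will yield linear normality. Because the Picard group of $M({\bf F})$ is discrete, the coordinate $Z_0 \in V_{\bf F}^*$, which equals $t^r = 1$ on $W \subset M({\bf F})$, gives a nowhere-vanishing trivialisation of $L|_W$; every $s \in H^0(M({\bf F}), L)$ then restricts to a regular function $P := (s/Z_0)|_W \in \C[W]$, and this restriction is injective since a section vanishing on the dense open $W$ vanishes on all of $M({\bf F})$.

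To bound the degree of $P$, I would pull $s$ back by $\phi_{\bf F}: \BP(\C\oplus W) \dashrightarrow M({\bf F})$. On $\BP(\C\oplus W) \setminus B$, where $B = {\bf Bs}(F^r)$, one has $\phi_{\bf F}^* L \cong \sO(r)$; and $B \subset \BP W$ has codimension at least $2$ in $\BP(\C\oplus W)$, so Hartogs' theorem extends $\phi_{\bf F}^* s$ to a global section of $\sO(r)$, that is, a polynomial of total degree $r$ in $(t,w)$. Restricting to $\{t=1\}$ gives a decomposition $P = \sum_{k=0}^{r} P_k$ with $P_k \in \Sym^k W^*$.

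The heart of the argument is a restriction of $s$ to certain lines in $M({\bf F})$. Fix $w \in {\bf Bs}({\bf F}) \setminus \{0\}$ and a general $u \in W$. By Euler-symmetry there is a $\C^\times$-action of Euler type at $u$, obtained by conjugating the action $A_o$ of Theorem \ref{t.model}(ii) by the translation $g_u$. Since ${\bf F}$ has order $1$ and $w \in {\bf Bs}({\bf F}) = {\bf Bs}(F^2)$, Proposition \ref{p.orbit}(2) identifies the $A_o$-stable curve through $o$ in direction $w$ as the line $\{[1:\lambda w:0:\cdots:0]\} \subset \BP V_{\bf F}$; its $g_u$-translate $\ell$ is therefore also a line in $\BP V_{\bf F}$, sits inside $M({\bf F})$, and is parametrised in the $W$-chart by $\lambda \mapsto u + \lambda w$. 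Hence $L|_\ell \cong \sO_{\BP^1}(1)$ and $P(u + \lambda w)$ must be a polynomial of degree at most $1$ in $\lambda$. Expanding
\[ P(u + \lambda w) \;=\; \sum_{k=0}^{r} \sum_{j=0}^{k} \lambda^j \binom{k}{j}\, P_k\bigl(\underbrace{w,\dots,w}_{j}, \underbrace{u,\dots,u}_{k-j}\bigr), \]
equating the coefficient of $\lambda^j$ for $j \geq 2$ to zero for every $u$, and separating contributions by degree in $u$ (the inner sum contributes one piece per $k$, each of distinct degree $k-j$ in $u$) one obtains
\[ \iota_w^2 P_k \;=\; 0 \ \text{in}\ \Sym^{k-2} W^*, \qquad \text{for all } w \in {\bf Bs}({\bf F}),\ k \geq 2. \]

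Saturation then forces $P_k \in F^k$. Set $G^k := \{Q \in \Sym^k W^* \mid \iota_w^2 Q = 0 \text{ for all } w \in {\bf Bs}({\bf F})\}$; I would show $G^k = F^k$ for every $k \geq 2$ by induction. For $k=2$, $\iota_w^2 Q = Q(w,w)$ and $G^2 = H^0(\BP W, \sI_{{\bf Bs}({\bf F})} \otimes \sO(2)) = F^2$ by the first saturation clause. For the inductive step, the commutation $\iota_w^2 \iota_v = \iota_v \iota_w^2$ together with the observation that a symmetric form of positive degree whose contraction with every $v \in W$ vanishes must itself vanish shows that $Q \in G^{k+1}$ iff $\iota_v Q \in G^k$ for every $v \in W$; by the inductive hypothesis this is equivalent to $\iota_v Q \in F^k$ for every $v$, which is $Q \in {\bf prolong}(F^k) = F^{k+1}$ by the second saturation clause. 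Consequently every $P_k$ lies in $F^k$, so $P \in V_{\bf F}^*$ and the embedding is linearly normal. The hard part is the geometric input of the third paragraph: one must verify from the $W$-action formulas in the proof of Theorem \ref{t.model} that the $g_u$-translate of the Euler line through $o$ is an honest line of degree $1$ in $\BP V_{\bf F}$ (rather than a rational curve of higher degree in the embedding) and that $L$ restricts to $\sO(1)$ on it, so that the constraint $\deg_\lambda P(u+\lambda w) \leq 1$ is legitimate; once that is in place, the algebraic extraction of $\iota_w^2 P_k = 0$ and the inductive identification $G^k = F^k$ through the two saturation conditions are essentially formal.
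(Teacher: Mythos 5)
Your argument is correct, but it takes a genuinely different route from the paper's. The paper argues by dimension comparison with the linearly normal model: it uses the discreteness of ${\rm Pic}(M({\bf F}))$ to linearize ${\rm Aut}_o(M({\bf F}))$ on $\BP \widetilde{V} = \BP H^0(M({\bf F}),L)^*$, so that the Euler $\C^{\times}$-actions survive and $j(M({\bf F})) \subset \BP\widetilde{V}$ is again Euler-symmetric with some symbol system $\widetilde{\bf F}$; Proposition \ref{p.orbit} gives ${\bf Bs}(\widetilde{\bf F}) = {\bf Bs}({\bf F})$, and the two saturation clauses then force $\widetilde{F}^2 \subset F^2$ and $\widetilde{F}^{k+1} \subset {\bf prolong}(\widetilde{F}^k) \subset {\bf prolong}(F^k) = F^{k+1}$, whence $\dim \widetilde{V} \leq \dim V_{\bf F}$. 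You instead compute $H^0(M({\bf F}),L)$ directly: restriction to the open orbit, the degree bound $\deg P \leq r$ via pullback along $\phi_{\bf F}$ and extension across the codimension-$\geq 2$ locus ${\bf Bs}(F^r)$, and the constraint $\iota_w^2 P_k = 0$ coming from the honest lines $\lambda \mapsto \phi_{\bf F}([1:u+\lambda w])$ for $w$ in the cone over ${\bf Bs}({\bf F})$ (these are indeed lines: since $\iota_w^j|_{F^k}=0$ for $j \geq 2$, one has $\iota_{u+\lambda w}^k = \iota_u^k + k\lambda\,\iota_w\circ\iota_u^{k-1}$, linear in $\lambda$, and $g_u$ acts linearly on $V_{\bf F}$). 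Your inductive identification of $G^k$ with $F^k$ consumes the two saturation hypotheses in exactly the same way the paper's chain of inclusions does; the difference is only in how the candidate sections are produced (concretely as the homogeneous pieces $P_k$ rather than abstractly as $\widetilde{F}^k$). Two remarks. First, your parenthetical that discreteness of the Picard group is what makes $Z_0$ a trivialisation of $L|_W$ is a non sequitur — $Z_0 = t^r$ is nonvanishing on the open orbit regardless — and, more strikingly, your proof appears never to use the Picard hypothesis at all; if every step holds up (I see no gap), you have proved the unconditional statement, which is stronger than the proposition as stated, so it is worth stating that explicitly rather than carrying a dead hypothesis. Second, what your approach gives up is brevity and the reusable intermediate fact that the linearly normal re-embedding of an Euler-symmetric variety with discrete Picard group is again Euler-symmetric with the same base locus; what it buys is a completely explicit description of $H^0(M({\bf F}),L)$ as the subspace $\C \oplus W^* \oplus F^2 \oplus \cdots \oplus F^r$ of $\C[W]$, with no appeal to the automorphism group.
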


\begin{proof}
Let $L$ be the hyperplane line bundle of $\BP V_{\bf F}$ restricted to $M({\bf F})$.
Let $\widetilde{V}$ be the dual space of $H^0(M({\bf F}), L)$ and $j: M({\bf F}) \subset \BP \widetilde{V}$ be the linearly normal embedding.
Since the Picard group is discrete, the connected automorphism group ${\rm Aut}_o(M({\bf F}))$ acts on $\BP \widetilde{V}$ and a $\C^{\times}$-action of Euler type  at a general point $x \in M({\bf F}) \subset \BP V_{\bf F}$ induces a $\C^{\times}$-action of Euler type  for the embedding $j$.
It follows that the image $j(M({\bf F})) \subset \BP \widetilde{V}$ is an Euler-symmetric variety.
Let $\widetilde{\bf F} = (\widetilde{F}^k \subset \Sym^k W^*)$ be the associated symbol system.
By Proposition \ref{p.orbit}, we have the equality ${\bf Bs}(\widetilde{\bf F}) = {\bf Bs}({\bf F})$. The saturatedness gives the inclusion
$$\widetilde{F}^2 \subset H^0(\mathbb{P}W, \mathcal{I}_{{\bf Bs}({\bf F})} \otimes \sO(2)) = F^2$$
and successive inclusions
$$\widetilde{F}^{k+1} \subset {\bf prolong}(\widetilde{F}^k) \subset {\bf prolong}(F^k) = F^{k+1}.$$ It follows that $\dim \widetilde{V} \leq \dim V_{\bf F}$, which implies $V_{\bf F} = \widetilde{V}$.
This shows that $M({\bf F}) \subset \BP V_{\bf F}$ is linearly normal.

It remains to show that if $M({\bf F})$ is normal, then ${\rm Pic}(M({\bf F}))$ is discrete. Let $S = {\rm Sing}(M({\bf F}))$, which is of codimension $\geq 2$. Let $U = M({\bf F}) \setminus S$ be the smooth locus, which contains an open subset $U_0$ isomorphic to $W$. For $n= \dim W$, the Chow group $CH_{n-1}(U \setminus U_0)$ is discrete because it is  generated by the irreducible components of $U \setminus U_0$ that have dimension $n-1$.  By the localization exact sequence of Chow groups
$$CH_{n-1}(U \setminus U_0) \to  CH_{n-1}(U) \simeq {\rm Pic}(U) \to CH_{n-1}(U_0) \simeq {\rm Pic}(U_0) = 0,$$ we see that ${\rm Pic}(U)$ is discrete.   As $M({\bf F})$ is normal, the Picard group ${\rm Pic}(M({\bf F}))$ is identified with Cartier divisor classes on $M({\bf F})$, hence it is a subgroup of the (Weil) divisor class group ${\rm Cl}(M({\bf F}))$. As  $S$ has codimension $\geq 2$, we have ${\rm Cl}(M({\bf F})) = {\rm Cl}(U) = {\rm Pic}(U)$ which is discrete, proving the claim.
\end{proof}

\begin{example}\label{e.IHSS}
The minimal projective embedding of an irreducible Hermitian symmetric space (also called minuscule varieties), other than projective space,  is given by a saturated symbol system (see Theorem 3.1 and Corollary 3.6 of \cite{LM03}).
\end{example}

\begin{remark}\label{r.prime}
Is the converse of Proposition \ref{p.saturated} true under some geometric conditions on
$M({\bf F})$?
Recall that a {\em prime Fano manifold} is a nonsingular projective subvariety $X \subset \BP^N$ covered by lines such that ${\rm Pic}(X) \simeq \Z \langle \mathcal{O}_X(1) \rangle$.
The projective subvarieties in Example \ref{e.IHSS} are examples of prime Fano manifolds.
A natural question is: if $M({\bf F}) \subset \BP V_{\bf F}$ is a linearly normal prime Fano manifold,
is ${\bf F}$ saturated? The following example in \cite{PR} is not a prime Fano manifold. \end{remark}

\begin{example} \label{e.PR}
Let $W$ be a vector space of dimension $n$ with coordinates $x_1, \cdots, x_n$.
Consider the symbol system ${\bf F}$ defined by $$F^2 = \langle x_1^2, x_1x_2, \cdots, x_1x_n \rangle \mbox{ and } F^3=\langle x_1^3 \rangle.$$ The base loci
is the hyperplane $\{ x_1=0\}$ in $\mathbb{P} W$, hence $H^0(\mathbb{P}W, \mathcal{I}_{{\bf Bs}({\bf F})} \otimes \sO(2)) = F^{2}$. But ${\bf F}$ is not saturated because $$ x_1^3, x_1^2x_2, \cdots, x_1^2x_n \ \in \ {\bf prolong}(F^2).$$   The associated Euler-symmetric variety $M({\bf F})$ is a rational normal scroll (\cite{PR}, Theorem 5.2), which is linearly normal.
\end{example}

\section{Equivariant compactifications of vector groups}

By Theorem \ref{t.model}, every Euler-symmetric projective variety $Z$ is an equivariant compactification of a vector group, i.e., there exists an action of the vector group $W, \dim W= \dim Z,$ on $Z$ with an open orbit.   Is the converse also true? In \cite{HT} (Section 4.1), it is  pointed out that there exists an equivariant compactification of $\mathbb{G}_a^1$, a singular curve,  which does not admit $\mathbb{G}_a^1$-equivariant projective embeddings. By \cite{B}, there are many singular cubic  hypersurfaces which are equivariant compactifications of vector groups, but by Example  \ref{e.CI}, they are not Euler-symmetric. Thus it seems reasonable to exclude singular varieties.
We propose the following conjecture.

\begin{conjecture}\label{c.Fano}
Let $X$ be a Fano manifold of Picard number 1 which is an equivariant compactification of a vector group. Then $X$ can be realized as an Euler-symmetric projective variety under a suitable projective embedding. \end{conjecture}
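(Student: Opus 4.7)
The plan is to reduce the conjecture to a single extension question: does the natural scaling automorphism of the vector group extend regularly to $X$?  Fix an identification of the open orbit $U \subset X$ with $\G_a^n$, let $e \in U$ correspond to $0 \in \G_a^n$, and write $W$ for the Lie algebra of $\G_a^n$.  The scaling $\sigma_{\lambda} : w \mapsto \lambda w$ is an automorphism of $\G_a^n$, hence a birational self-map of $X$, so the semidirect product $\G_a^n \rtimes \C^{\times}$ acts birationally on $X$.  The core of the argument is to show that this $\C^{\times}$-action extends to a regular action on $X$; once this is achieved, Step 1 below converts the extension into an Euler-symmetric embedding.

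\textbf{Step 1 (extension implies Euler-symmetry).}  Suppose the scaling $\C^{\times}$-action extends regularly to $X$.  Then $e$ is an isolated fixed point of the extended action, and the induced action on $T_e X \simeq W$ is scalar multiplication.  Since $X$ has Picard number one, the ample generator $H$ of ${\rm Pic}(X)$ has some power $H^{\otimes k}$ which is very ample and, being attached to a $\C^{\times}$-stable divisor class, admits a $\C^{\times}$-linearization after passing to a further power if needed.  The equivariant embedding $X \hookrightarrow \BP H^0(X, H^{\otimes k})^*$ realizes the $\C^{\times}$-action as a subgroup of $\GL(H^0(X, H^{\otimes k})^*)$, and it is of Euler type at $e$ by construction, so $X$ is Euler-symmetric in this embedding.

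\textbf{Step 2 (extending the scaling).}  Let $N \subset {\rm Aut}^0(X)$ denote the normalizer of the image $A \simeq \G_a^n$ of the vector-group action.  The induced conjugation homomorphism $N \to {\rm Aut}(A) \simeq \GL_n(\C)$ identifies the desired extension with the statement that the image of $N$ contains the central $\C^{\times}$ of scalar matrices.  I would approach this through a deformation argument: parametrize faithful $\G_a^n$-actions on $X$ with open orbit, a scheme that carries a natural $\GL_n$-action by reparameterization; one needs to lift the scalar $\C^{\times} \subset \GL_n$ to a curve of ${\rm Aut}^0(X)$-conjugate actions.  The rigidity coming from Picard number one should play a decisive role: invariance under scaling of the numerical class of the boundary divisor $D = X \setminus U$ (a positive multiple of $H$ by Picard number one), combined with smoothness of $X$, should force the family $\{\alpha \circ \sigma_{\lambda}\}_{\lambda \in \C^{\times}}$ of $\G_a^n$-structures to lie in a single ${\rm Aut}^0(X)$-conjugacy class, yielding the required $\C^{\times} \subset N$.

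The main obstacle is Step 2, and it essentially constitutes the content of the conjecture.  The scaling $\sigma_{\lambda}$ normalizes $\G_a^n$ on $U$; the delicate question is regularity of its birational extension to $X$.  Singular counterexamples exist — the cubic hypersurfaces of \cite{B} and those of Example \ref{e.CI} show that equivariant compactifications of vector groups need not be Euler-symmetric when singular — so the proof must use smoothness in an essential way.  I would first attempt the case in which $D$ is irreducible (so $D \equiv mH$), reducing the extension to showing that $\sigma_{\lambda}$ acts regularly on $D$ itself, which can be analyzed via the induced action on the normal bundle $\sO_D(D)$; the general case likely requires an infinitesimal analysis, in the spirit of \cite{HT}, of the moduli of $\G_a^n$-structures on Fano manifolds of Picard number one.
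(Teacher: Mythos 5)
This statement is a \emph{conjecture}; the paper does not prove it, and neither do you. Your Step 1 is essentially sound and matches the paper's point of view: once the scaling $\C^{\times}$ extends to a regular action on $X$, an equivariant very ample embedding exhibits $X$ as Euler-symmetric (this is how Theorem \ref{t.EulerSEC} concludes). But Step 2 — the actual extension of the birational scaling to a regular automorphism — is where the entire difficulty lives, and your treatment of it is a plan, not a proof. The proposed deformation argument does not go through as stated: invariance of the numerical class of the boundary divisor $D$ under $\sigma_{\lambda}$ is automatic (any birational self-map preserving the open orbit preserves $D$ as a set, hence its class), so it carries no information; and the scheme of $\G_a^n$-structures with open orbit on $X$ can a priori have positive-dimensional ${\rm Aut}^0(X)$-orbit space, so nothing forces the family $\{\alpha\circ\sigma_{\lambda}\}$ to stay in one conjugacy class. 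You correctly acknowledge that Step 2 ``essentially constitutes the content of the conjecture,'' which is an admission that no proof has been given.

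For comparison, the paper's partial result (Theorem \ref{t.EulerSEC}, Corollary \ref{c.SEC}) carries out your Step 2 only under the additional hypothesis that the VMRT at a general point is nonsingular. The mechanism is quite different from what you sketch: the $W$-action trivializes the VMRT-structure over the open orbit (local flatness, Proposition \ref{p.vmrtSEC}(i)), the scaling action on the orbit therefore preserves the VMRT-structure, and the Cartan--Fubini type extension theorem (Theorem \ref{t.CF}) — which needs nonsingularity and nondegeneracy of the VMRT — upgrades this local biholomorphism to a global biregular automorphism. If you want to make progress on Step 2, that is the tool to engage with; the remaining open problem (the paper's Conjecture 5.10) is precisely to remove the VMRT nonsingularity hypothesis, not to find a route around Cartan--Fubini via deformations of $\G_a^n$-structures.
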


In this section, we show that the conjecture holds under one technical assumption, formulated in terms of VMRT (varieties of minimal rational tangents).

\begin{definition}\label{d.VMRT}
Let $X$ be a uniruled projective manifold. An irreducible
component $\sK$ of the space of rational curves
on $X$ is called {\em a minimal rational component} if the
subscheme $\sK_x$ of $\sK$ parameterizing curves passing through
a general point $x \in X$ is non-empty and proper. Curves
parameterized by $\sK$ will be called {\em minimal rational
curves}. Let $\rho: \sU \to \sK$ be the universal family and $\mu:
\sU \to X$ the evaluation map. The tangent map $\tau: \sU
\dasharrow \BP T(X)$ is defined by $\tau(u) = [T_{\mu(u)}
(\mu(\rho^{-1} \rho(u)))] \in \BP T_{\mu(u)}(X)$. The closure $\sC
\subset \BP T(X)$ of its image is the {\em VMRT-structure} on $X$. The
natural projection $\sC \to X$ is a proper surjective morphism and
a general fiber $\sC_x \subset \BP T_x(X)$ is called the VMRT  at the
point $x \in X$. The VMRT-structure $\sC$ is  {\em locally
flat} if there exists  an analytical open subset $U$ of $X$ with an open immersion
  $\phi: U \to
 \C^n, n= \dim X,$ and a projective subvariety $Y \subset \BP^{n-1}$
with $\dim Y = \dim \sC_x$ such that $\phi_*: \BP T(U) \to \BP
T(\C^n)$ maps $\sC|_{U}$ into the trivial fiber subbundle $\C^n
\times Y$ of the trivial projective bundle $\BP T(\C^n) = \C^n \times \BP^{n-1}.$
\end{definition}

 The concept of VMRT is useful to us via the Cartan-Fubini type extension theorem, Theorem 1.2 in \cite{HM01}.
We will quote a simpler version, Theorem 6.8 of \cite{FH1}.

 \begin{theorem}\label{t.CF}
 Let $X_1, X_2$ be two Fano manifolds of Picard number 1, different from projective spaces. Let $\sK_1$ (resp. $\sK_2$) be a family of minimal
 rational curves on $X_1$ (resp. $X_2$). Assume that for a general point $x \in X_1$, the VMRT $\sC_x \subset
 \BP T_x(X_1)$ is irreducible and nonsingular. Let $U_1 \subset X_1$ and $U_2 \subset X_2$ be connected analytical open subsets.
 Suppose there exists a biholomorphic map $\varphi: U_1 \to U_2$ such that for a general point $x \in U_1$, the
 differential $d\varphi_x: \BP T_x(U_1) \to \BP T_{\varphi(x)}(U_2)$ sends $\sC_x$ isomorphically to $\sC_{\varphi(x)}$. Then there exists a biregular morphism $\Phi: X_1 \to X_2$ such that $\varphi= \Phi|_{U_1}.$ \end{theorem}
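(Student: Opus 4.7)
The plan is to prove Theorem \ref{t.CF} by analytic continuation of $\varphi$ along minimal rational curves, combined with a local rigidity statement for VMRT-preserving biholomorphisms, and Serre's GAGA at the end to pass from holomorphic to algebraic.

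The first step is a local rigidity result. Call a local biholomorphism $\psi\colon V_1 \to V_2$ between open subsets $V_i \subset X_i$ \emph{VMRT-preserving} if $d\psi(\sC_x) = \sC_{\psi(x)}$ for general $x \in V_1$. One needs to show that the germ of such a $\psi$ at $x$ is determined by its $1$-jet at $x$; equivalently, that the prolongation of the Lie algebra of infinitesimal linear automorphisms of the projective variety $\sC_x \subset \BP T_x X_1$ vanishes in sufficiently high order. Here both the irreducibility and smoothness of $\sC_x$ and the hypothesis $X_1 \ne \BP^n$ (which prevents $\sC_x = \BP T_x X_1$) are essential: they ensure that $\sC_x$ is a nontrivial projective subvariety whose symmetry algebra has finite, controllable prolongation.

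The second step is propagation along minimal rational curves. For each minimal rational curve $C \subset X_1$ meeting $U_1$, the differential hypothesis forces $\varphi$ to send the arc $C \cap U_1$ into a unique minimal rational curve $C' \subset X_2$. Using that the deformation space of $C$ is smooth at $[C]$ (standard for minimal rational curves on a Fano manifold of Picard number $1$, via $H^1$-vanishing on the normal bundle) together with the rigidity from Step~1, one extends $\varphi$ holomorphically to an open neighborhood of $C$ in $X_1$, mapping it onto an open neighborhood of $C'$ in $X_2$. Iterating along chains of minimal rational curves, and invoking the classical fact that any two points of a Fano manifold of Picard number $1$ can be joined by such a chain, one produces a holomorphic extension $\Phi\colon X_1 \to X_2$ defined on all of $X_1$; Step~1 rules out monodromy and makes $\Phi$ single-valued.

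A symmetric argument starting from $\varphi^{-1}$ produces a holomorphic inverse, so $\Phi$ is bijective, and GAGA then upgrades it to a biregular morphism. The main obstacle is Step~1 --- the rigidity of cone-preserving germs --- which is the core technical content of \cite{HM01} and requires developing a Cartan-geometric theory of VMRT (``cone'') structures: one sets up an equivalence between such structures and certain $G$-structures of finite type, for which the prolongation of $\aut(\sC_x)$ controls the order of jet to which a VMRT-preserving biholomorphism can be determined.
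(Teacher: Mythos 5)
First, note that the paper does not prove this statement at all: it is quoted verbatim as Theorem 6.8 of \cite{FH1}, itself a simplified version of Theorem 1.2 of \cite{HM01}, so there is no in-paper argument to compare against. Your proposal is therefore judged against the actual proof in those references, and it has two genuine gaps. The first is your Step 1: the claim that a VMRT-preserving germ is determined by its $1$-jet (equivalently, that the first prolongation of $\aut(\widehat{\sC}_x)$ vanishes) is false in general. For a hyperquadric, or for any irreducible Hermitian symmetric space of rank $\geq 2$, the VMRT is irreducible, nonsingular and nondegenerate, yet the first prolongation of its infinitesimal linear automorphism algebra is nonzero --- indeed classifying exactly which $\sC_x$ have nonzero prolongation is the subject of \cite{FH1}. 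What is true (and nontrivial) is that the \emph{second} prolongation vanishes for irreducible nonsingular nondegenerate non-linear subvarieties, but the proof of the Cartan--Fubini theorem in \cite{HM01} deliberately does not route through prolongations or finite-type $G$-structures at all.

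The second and more serious gap is in your Step 2, where you assert that ``the differential hypothesis forces $\varphi$ to send the arc $C\cap U_1$ into a unique minimal rational curve.'' Knowing that $d\varphi_x$ carries $\sC_x$ to $\sC_{\varphi(x)}$ at each point says nothing a priori about curves: a diffeomorphism can preserve a field of cones without mapping the integral curves of one distinguished line field to those of another. The core of \cite{HM01} is precisely to show that the tautological foliation on the total space of $\sC$ (whose leaves are tangential lifts of minimal rational curves) is intrinsically determined by the cone structure --- via nondegeneracy of $\sC_x$, finiteness of the Gauss map, and a Cauchy-characteristic argument --- so that any VMRT-preserving biholomorphism automatically preserves it. Only after that does one extend $\varphi$, and the extension in \cite{HM01} is built as an algebraic/rational map using parametrized families (the universal family over $\sK$) rather than by germ-by-germ analytic continuation; this is also how the well-definedness (your ``monodromy'') issue is actually handled, whereas your appeal to Step 1 to rule out monodromy does not work once Step 1 itself fails. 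The concluding use of GAGA is harmless but not where the difficulty lies.
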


 Now we study some basic properties of equivariant compactifications of vector groups of Picard number one.

\begin{proposition}\label{p.vmrtSEC}
Let $X$ be a Fano manifold of Picard number one which is an equivariant compactification of the vector group $W$ with an open orbit $X^o \subset X$.   Let $\sK$ and $\sC$ be as in Definition \ref{d.VMRT}. Assuming
that  the VMRT $\sC_x$ at  a point $x \in X^o$ is nonsingular, we have the following.
\begin{itemize}
 \item[(i)] The VMRT-structure $\sC$ is locally flat.
\item[(ii)] The VMRT $\sC_x \subset \BP T_x X$ at a general point $x \in X$ is
nondegenerate and irreducible.
\item[(iii)]  The $\C^{\times}$-action on the vector space $W$ by scalar multiplication induces a $\C^{\times}$-action on $X$.
\item[(iv)] A member of $\sK_x, x \in X^o,$ is the closure of the image of a 1-dimensional subspace in $W$.
\item[(v)] Let $D \subset X$ be the complement of the open orbit $X^o$.
Then $D$ is an irreducible divisor which is an ample generator of ${\rm
Pic}(X)$. If $C \subset X$ is a minimal rational curve, then $D
\cdot C = 1$.
\item[(vi)] The map $\phi_{|D|}: X \dasharrow \BP H^0(X, D)^*$ is
birational onto its image, which sends a general member of $\sK$
on $X$ to a line in $\BP H^0(X, D)^*$. \end{itemize}
\end{proposition}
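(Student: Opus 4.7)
The plan is to prove the six items in the order (iii), (v), (iv), (vi), (i), (ii), using the $\C^{\times}$-action from (iii) as the main geometric tool throughout.

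For (iii), the scaling action of $\C^{\times}$ on $W$ normalizes translation, so the semidirect product $W\rtimes\C^{\times}$ acts regularly on $X^{o}\simeq W$ and hence rationally on $X$. Each $\lambda\in\C^{\times}$ gives a birational self-map $\gamma_{\lambda}:X\dasharrow X$ whose indeterminacy lies in $D:=X\setminus X^{o}$ and thus has codimension at least two. Since $X$ is a smooth Fano of Picard number one with ample generator $H$, and $\gamma_{\lambda}$ preserves ampleness, we have $\gamma_{\lambda}^{*}H=H$; this polarized birational self-map is then automatically a biregular automorphism (small birational modifications being absent in Picard rank one). Verifying this extension is the main technical obstacle of the proof.

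For (v), the affineness $X^{o}\simeq\A^{n}$ and $\mathrm{Pic}(X^{o})=0$ force $D$ to be of pure codimension one with $\bigoplus_{i}\Z[D_{i}]\twoheadrightarrow\mathrm{Pic}(X)=\Z H$, so writing $[D_{i}]=d_{i}H$ we have $\gcd(d_{i})=1$. If two distinct irreducible components $D_{1},D_{2}$ existed, the linear equivalence $d_{2}D_{1}\sim d_{1}D_{2}$ would produce a rational function on $X$ that is regular and nowhere vanishing on $\A^{n}$, hence constant, forcing $D_{1}=D_{2}$; so $D$ is irreducible with $[D]=H$. For $D\cdot C=1$: a minimal rational curve $C$ meets $X^{o}$, so $D\cdot C\geq 1$; translating so that $C$ passes through the $\C^{\times}$-fixed point, the argument of Proposition \ref{p.vmrt} shows $C$ is $\C^{\times}$-stable, hence is the closure of a one-dimensional subspace of $W$ (which also yields (iv)), and a direct calculation on this affine line gives transverse intersection with $D$ at a single point.

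Parts (vi), (i), (ii) follow fairly directly. For (vi), $[D]=H$ together with $D\cdot C=1$ implies that $\phi_{|D|}$ (up to passing to a suitable positive multiple to ensure very ampleness) is a birational morphism onto its image that sends each minimal rational curve to a line. For (i), the $W$-translations trivialize $\sC$ over the open orbit, giving $\sC|_{X^{o}}\cong X^{o}\times\sC_{0}\subset X^{o}\times\BP W$, which is the required analytic flatness. For (ii), nondegeneracy of $\sC_{x}$ follows from the tangent-spanning property of minimal rational curves on Fano manifolds of Picard number one (a hyperplane of $\BP T_{0}X=\BP W$ containing $\sC_{0}$ would integrate under $W$ to a proper $\mathrm{Aut}^{\circ}(X)$-invariant subvariety of $X$, excluded by $\rho=1$); irreducibility of the nonsingular $\sC_{x}$ follows from a connectedness argument using the $W$- and $\C^{\times}$-invariance of $\sC_{0}$ together with the fact that $\sK$ is a single irreducible component of the parameter space. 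The principal difficulty is (iii); everything else is a fairly direct application of $W$-homogeneity combined with Picard-number-one intersection theory.
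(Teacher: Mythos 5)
Your treatment of (iii) has a genuine gap, and it sits at the heart of the proposition. You propose to extend the scaling maps $\gamma_\lambda$ by pure birational geometry: indeterminacy in codimension $\geq 2$, ``$\gamma_\lambda$ preserves ampleness, so $\gamma_\lambda^*H=H$'', hence biregular. Neither premise is established. A birational self-map of a Picard-number-one Fano manifold does not pull back the ample generator to itself in general (the standard Cremona transformation of $\BP^2$ restricts to an automorphism of the torus yet has $\gamma^*H=2H$), and the degree $d(\gamma_\lambda)$ defined by $\gamma_\lambda^*H\equiv d(\gamma_\lambda)H$ need not be constant in $\lambda$ just because $\gamma_1=\mathrm{id}$; nor have you shown that $\gamma_\lambda$ contracts no divisor (the only candidate being $D$ itself), which is what ``small birational modifications being absent'' would require as input rather than deliver as output. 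You flag this extension as ``the main technical obstacle'' and then assert it. Tellingly, your argument for (iii) never uses the hypothesis that $\sC_x$ is nonsingular — but that hypothesis is exactly what makes (iii) work: the paper first proves (i) (local flatness, via $W$-translations) and (ii) (irreducibility by Proposition 2.2 of \cite{FH0}, nondegeneracy because an integrable VMRT-distribution contradicts Proposition 13 of \cite{HM98}), and then applies the Cartan--Fubini extension theorem (Theorem \ref{t.CF}) to the local $\C^{\times}$-action, which preserves the locally flat VMRT-structure, to extend it to all of $X$. If your birational shortcut were valid, the nonsingularity hypothesis would be superfluous and Conjecture \ref{c.Fano} would follow unconditionally; it does not.

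The remaining parts are closer to the mark but have smaller soft spots. In (v), your irreducibility argument for $D$ is essentially Theorem 2.5 of \cite{HT} and is fine, but ``a direct calculation on this affine line gives transverse intersection with $D$ at a single point'' asserts $D\cdot C=1$ rather than proving it; the paper instead intersects $C$ with the closure $H$ of a linear hyperplane of $W$, notes that $H\cap D$ has codimension $2$ so a general minimal rational curve meets $H$ only inside $X^o$ where the intersection is a single reduced point, and deduces both that $H$ generates $\mathrm{Pic}(X)$ and that $D\cdot C=H\cdot C=1$. In (vi) you should not pass to a multiple of $D$: the statement concerns $|D|$ itself, and the point is that the closures of all affine hyperplanes are linearly equivalent to $D$, so $|D|$ already separates points of $X^o$. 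In (ii), ``a hyperplane containing $\sC_0$ would integrate to an $\mathrm{Aut}^\circ(X)$-invariant subvariety, excluded by $\rho=1$'' is not by itself a contradiction (divisors exist on any variety); the actual input is the non-integrability of the distribution spanned by the VMRTs from \cite{HM98}. Your (i) matches the paper. Fix (iii) by routing through (i), (ii) and Theorem \ref{t.CF}, and tighten (v) and (vi) as above.
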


\begin{proof}
(i) The $W$-action  on $\BP T X$ preserves the VMRT-structure $\sC \subset \BP T X$. Thus the $W$-action on $X^o$ trivializes $\sC|_{X^o}$ as a subbundle of $\BP T X^o \cong \BP T W.$

(ii) By Proposition 2.2 \cite{FH0}, the VMRT $\sC_x$ is irreducible. If $\sC_x
\subset \BP T_x X$ is degenerate, then the distribution spanned by
the VMRT's on $X^o$ is integrable since $\sC$ is locally flat. This
contradicts Proposition 13 in \cite{HM98}.

(iii) The induced $\C^{\times}$-action on $X^o$ preserves $\sC|_{X^o} \subset \BP T X^o$
because $\sC|_{X^o}$ corresponds to a trivial subbundle of $\BP T W$ by (i). By
the assumption that $\sC_x$ is nonsingular and (ii), we can apply Theorem \ref{t.CF} to conclude that this $\C^{\times}$-action extends to a $\C^{\times}$-action on $X$.

(iv) As we have seen in the proof of Proposition \ref{p.vmrt}, a  member of
$\sK_x, x \in X^o$, is stable under the  $\C^\times$-action of (iii).  Thus it is the closure of the image of
a 1-dimensional subspace in $W$.

(v) As $X$ has Picard number one, $D$ is irreducible and it freely
generates ${\rm Pic}(X)$ by Theorem 2.5 \cite{HT}. Take a
codimension 1 linear subspace $\C^{n-1} \subset W$ and let $H$ be
its closure in $X$. The intersection $H \cap D$ has codimension 2
in $X$. Thus a general minimal rational curve $C$ on $X$
intersects $H$ in $H \cap X^o$ (e.g. by Lemma 1.1 (3) in \cite{HM01}). By (iv), we conclude $H \cdot C =1$. This
implies $H$ generates ${\rm Pic}(X)$ and  $D \cdot C =1$.

(vi) From the proof of (v), the closure of the image in $X^o$ of any vector subspace $\C^{n-1} \subset W$
is linearly equivalent to $D$. Thus $\phi_{|D|}$ gives an
embedding from $X^o$ to $\BP H^0(X, D)^*$.
\end{proof}

\begin{theorem} \label{t.EulerSEC}
Let $X$ be a Fano manifold of Picard number 1 which is an equivariant compactification of a vector group and whose VMRT at a general point is nonsingular. Let $D \subset X$ be as in Proposition \ref{p.vmrtSEC} (v) and let $m$ be the minimal number such that $mD$ is very ample. Then the embedding $X \subset \BP H^0(X, mD)$ realizes $X$ as an Euler-symmetric projective variety whose system of fundamental forms has order $m$.
\end{theorem}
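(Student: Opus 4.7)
The plan is to produce a $\C^{\times}$-action of Euler type at the origin $o$ of the open orbit $X^o \cong W$ by linearizing the action from Proposition \ref{p.vmrtSEC}~(iii) on the line bundle $\sO(mD)$, to spread it to a general point via the transitive $W$-action, and then to identify the order of the resulting symbol system via Proposition \ref{p.orbit}.

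First I would invoke Proposition \ref{p.vmrtSEC}~(iii) to extend scalar multiplication on $W$ to a $\C^{\times}$-action on $X$. Since $\C^{\times}$ is connected and ${\rm Pic}(X) = \Z\langle D\rangle$ is discrete, the divisor class of $D$ is preserved, and in fact $D = X \setminus X^o$ is preserved as a divisor because the open orbit is $\C^{\times}$-stable. Consequently $\sO(mD)$ carries a $\C^{\times}$-equivariant structure, yielding a multiplicative subgroup $A_o \subset \GL(H^0(X,mD))$ whose induced action on $\BP H^0(X,mD)^*$ restricts to the given action on $X$. Let $o \in X^o$ correspond to $0 \in W$. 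The only fixed point of scalar multiplication inside $X^o$ is $o$ itself and all other fixed points of $A_o$ lie in $D$, so $o$ is isolated; the derivative action on $T_o X \cong W$ is by construction scalar multiplication. Thus $A_o$ is of Euler type at $o$. Conjugating $A_o$ by the $W$-translations, which act on $X$ by projective transformations preserving the embedding, yields $\C^{\times}$-actions of Euler type at every point of $X^o$, so $X \subset \BP H^0(X,mD)^*$ is Euler-symmetric.

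For the order, I would appeal to Proposition \ref{p.orbit} and compute the minimal $|mD|$-degree of an $A_o$-stable irreducible curve $C \subset X$ through $o$. Such a $C$ must meet $X^o$, and $(C \cap X^o) \setminus \{o\}$ is a nonempty orbit of scalar multiplication on $W$, whose closure in $W$ is a one-dimensional linear subspace; conversely the closure in $X$ of every such line is $A_o$-invariant. In particular $C$ is rational, and its $|mD|$-degree equals $m \cdot (D \cdot C) \geq m$ because $D$ is an ample generator of ${\rm Pic}(X)$. Equality is attained: by Proposition \ref{p.vmrtSEC}~(iv) each minimal rational curve through $o$ is the closure of a one-dimensional subspace of $W$ and is therefore $A_o$-stable, while Proposition \ref{p.vmrtSEC}~(v) gives $D \cdot C = 1$ for such curves. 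Hence the minimum equals $m$ and the order of the symbol system of $X$ is $m$.

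The main obstacle is the linearization step together with the isolatedness of $o$: one needs the $\C^{\times}$-action on $X$ from Proposition \ref{p.vmrtSEC}~(iii) to come from a multiplicative subgroup of $\GL(H^0(X,mD))$ rather than only of $\PGL$, and one must exclude accumulation of fixed points at $o$ from the boundary divisor $D$. Both points reduce to the discreteness of ${\rm Pic}(X)$ and the $\C^{\times}$-stability of $X^o$; once they are settled the order computation is an immediate numerical consequence of $D \cdot C = 1$ for minimal rational curves.
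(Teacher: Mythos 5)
Your proposal is correct and follows essentially the same route as the paper: the paper's (much terser) proof likewise obtains the Euler-type $\C^{\times}$-action at a general point from Proposition \ref{p.vmrtSEC}~(iii), uses $D\cdot C=1$ from Proposition \ref{p.vmrtSEC}~(v) to see that the minimal degree of $\C^{\times}$-stable curves through $o$ in the $mD$-embedding is $m$, and concludes via Proposition \ref{p.orbit}. You have simply filled in details the paper leaves implicit (linearization on $H^0(X,mD)$ via discreteness of ${\rm Pic}(X)$, isolatedness of the fixed point $o$, and the identification of $A_o$-stable curves through $o$ with closures of lines), all of which are sound.
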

\begin{proof}
By the proof of Proposition \ref{p.vmrtSEC} (iii), for a general point $x \in X$, there exists a $\mathbb{C}^{\times}$-action on  $X \subset \BP H^0(X, mD)^*$  of Euler type at $x$. Thus the embedded variety is Euler-symmetric.   By Proposition \ref{p.vmrtSEC} (v), the minimal degree of $\C^{\times}$-orbits  is $m$. By Proposition \ref{p.orbit}, the order of the system of fundamental forms has order $m$.
\end{proof}

Recall (Remark \ref{r.prime}) that a prime Fano manifold $X$ is a nonsingular projective variety $X$ with
${\rm Pic}(X) \cong \Z L$ for a very ample line bundle $L$ such that $X$ is covered by rational curves of degree $1$ with respect to $L$. It is well-known that in this case the VMRT of lines through a general point is nonsingular (e.g. Proposition 3.2 in \cite{FH1}). Thus we have the following corollary.

\begin{corollary} \label{c.SEC}
If a prime Fano manifold $X$ is an equivariant compactification of the vector group, then the embedding $X \subset \BP H^0(X, L)^*$ is  Euler-symmetric of order 1. \end{corollary}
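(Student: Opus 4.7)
The plan is to derive this corollary from Theorem \ref{t.EulerSEC} by verifying its hypotheses and then identifying the line bundles involved. Since $X$ is prime Fano, ${\rm Pic}(X) \cong \Z L$ with $L$ very ample, so $X$ has Picard number one, and $X$ is covered by rational curves of $L$-degree $1$. As $1$ is the minimum possible $L$-degree on a rational curve, the family of such lines forms a minimal rational component, and by the fact cited just before the statement (Proposition 3.2 in \cite{FH1}), the associated VMRT $\sC_x \subset \BP T_x X$ at a general point is nonsingular. Together with the equivariant compactification hypothesis, this places $X$ in the setting of Theorem \ref{t.EulerSEC}.

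Applying that theorem, the embedding $X \subset \BP H^0(X, mD)^*$ realizes $X$ as an Euler-symmetric projective variety of order $m$, where $D$ is the complement of the open $W$-orbit and $m$ is the minimal positive integer such that $mD$ is very ample. It therefore remains to show $D \sim L$ and $m = 1$. By Proposition \ref{p.vmrtSEC}(v), $D$ is an ample generator of ${\rm Pic}(X) \cong \Z$, and by the prime Fano hypothesis so is $L$; two positive generators of an infinite cyclic group must coincide, hence $D \sim L$. Since $L$ is very ample by assumption, the minimal $m$ with $mD$ very ample is $m = 1$, and the embedding produced by Theorem \ref{t.EulerSEC} is exactly $X \subset \BP H^0(X, L)^*$ with order $1$.

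I expect no serious obstacle here: the corollary is essentially a specialization of Theorem \ref{t.EulerSEC} to the case in which the ample generator of ${\rm Pic}(X)$ is already very ample, and the only remaining bookkeeping is the identification of $D$ with $L$ via positivity in the cyclic Picard group, together with the observation that for a prime Fano manifold the family of lines supplies a minimal rational component with nonsingular VMRT.
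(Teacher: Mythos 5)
Your proof is correct and follows the same route the paper intends: the remark preceding the corollary supplies the nonsingularity of the VMRT of lines (via Proposition 3.2 of \cite{FH1}), and the corollary is then a direct specialization of Theorem \ref{t.EulerSEC}, with $D\sim L$ forced by both being ample generators of the cyclic Picard group and $m=1$ since $L$ is already very ample. Nothing further is needed.
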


Based on Theorem \ref{t.EulerSEC}, we propose the following, which would imply Conjecture \ref{c.Fano}.

\begin{conjecture}
Let $X$ be a Fano manifold of Picard number 1 which is an equivariant compactification of a vector group. Then for some choice of $\sK$, the VMRT $\sC_x$ at a general point $x \in X$ is nonsingular. \end{conjecture}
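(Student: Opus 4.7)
The plan is to reduce the nonsingularity of the VMRT to the construction of an algebraic $\C^{\times}$-action on $X$ extending the scalar action of $\C^{\times}$ on $W$, and then to deduce smoothness from the structure theory of $\C^{\times}$-fixed loci on projective manifolds. Fix any minimal rational component $\sK$ on $X$. Since $W$ is connected and permutes the finitely many minimal rational components, it preserves $\sK$; the $W$-action on $X^o \cong W$ trivialises $T X^o \cong X^o \times W$ in such a way that the VMRT $\sC_x$ at every $x \in X^o$ is identified with one and the same projective subvariety $\sC_0 \subset \BP W$. Thus it suffices to show that $\sC_0$ is smooth.

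Assuming the scalar extension has been constructed, an argument essentially identical to Proposition \ref{p.vmrt} would give the conclusion: the extended $\C^{\times}$ fixes $o:=0 \in W \subset X$ and acts on $T_o X \cong W$ by scalar multiplication, hence is of Euler type at $o$ in the sense of Definition \ref{d.Euler}. A general member $C$ of each irreducible component of $\sK_o$ has normal bundle $\sO(1)^p \oplus \sO^q$, so the tangent direction at $o$ distinguishes $C$ within its component; since the $\C^{\times}$-orbit of $C$ in $\sK_o$ stays in the same component and $\C^{\times}$ acts trivially on $\BP T_o X$, every such $C$ is $\C^{\times}$-stable. Then $\sC_0$, being the set of tangent directions at $o$ of $\C^{\times}$-stable minimal rational curves, is biregular to a component of the Bialynicki-Birula fixed locus of $\C^{\times}$ on $X$, hence is smooth.

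The main obstacle is constructing the extension itself. One approach I would attempt is to embed $X \subset \BP H^0(X, mD)^*$ (with $D$ the boundary divisor from Proposition \ref{p.vmrtSEC}(v), for which the ampleness half of the proof does not invoke the nonsingular-VMRT hypothesis), and to build a $\C^{\times}$-action on $H^0(X, mD)$ refining the filtration by order of vanishing at $o$. This filtration is manifestly $W$-invariant, but a $W$-equivariant splitting --- equivalently, a one-parameter subgroup inside the normaliser of $W$ in ${\rm Aut}^\circ(X)$ whose adjoint action on $W$ is scalar multiplication --- does not follow from the bare hypotheses. An alternative is the birational route: the automorphism $\psi_\lambda : W \to W$, $w \mapsto \lambda w$, extends birationally to $X$ for each $\lambda \in \C^{\times}$, and one would need to prove its regularity using Picard-number-one rigidity together with the irreducibility and $\psi_\lambda$-invariance of $D$. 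In either approach the essential difficulty is the same: producing this extra symmetry is a structural assertion about ${\rm Aut}(X)$ that is not immediately forced by the standing hypotheses, and it is precisely this gap where the conjecture remains open.
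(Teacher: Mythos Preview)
The statement is a \emph{conjecture}; the paper offers no proof of it, and none is known. So there is nothing to compare your attempt against on the paper's side.

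Your write-up is not a proof either, and to your credit you say so explicitly in the last paragraph. The strategy you outline is exactly the mechanism the paper exploits in the \emph{reverse} direction: in Proposition \ref{p.vmrtSEC}(iii) the extension of the scalar $\C^{\times}$-action from $W$ to $X$ is obtained via Cartan--Fubini (Theorem \ref{t.CF}), and that theorem requires the VMRT at a general point to be irreducible and nonsingular. Thus the argument you sketch is circular as stated: you need nonsingularity of $\sC_x$ to build the $\C^{\times}$-action, and you want the $\C^{\times}$-action to deduce nonsingularity of $\sC_x$. Your two suggested work-arounds (a splitting of the jet filtration on $H^0(X,mD)$, or direct regularity of the birational extension $\psi_\lambda$) are natural, but neither is established in the paper and both amount to producing a one-parameter subgroup in ${\rm Aut}^\circ(X)$ normalising $W$ with scalar adjoint action---which is precisely the missing structural input. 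You have correctly located the gap; you have not closed it, and neither does the paper.
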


\bigskip
Baohua Fu

Institute of Mathematics, AMSS, Chinese Academy of Sciences,

55 ZhongGuanCun East Road, Beijing, 100190, China
and

 School of Mathematical Sciences, University of Chinese Academy of Sciences, Beijing, China

 bhfu@math.ac.cn

\bigskip
Jun-Muk Hwang

 Korea Institute for Advanced Study, Hoegiro 85,

Seoul, 02455, Korea

jmhwang@kias.re.kr

\end{document}